\newtheorem{theorem}{Theorem}
\theoremstyle{plain}
\newtheorem{corollary}{Corollary}
\newtheorem{lemma}{Lemma}
\newtheorem{proposition}{Proposition}
\newtheorem{remark}{Remark}
\numberwithin{equation}{section}
\begin{document}
\title[On the rescaled Riemannian metric of Cheeger Gromoll type]{On the
rescaled Riemannian metric of \\
Cheeger Gromoll type on the cotangent bundle}
\author{A. GEZER}
\address{Ataturk University, Faculty of Science, Department of Mathematics,
25240, Erzurum-Turkey.}
\email{agezer@atauni.edu.tr}
\author{M. ALTUNBAS}
\address{Erzincan University, Faculty of Science and Art, Department of
Mathematics, 24030, Erzincan-Turkey.}
\email{maltunbas@erzincan.edu.tr}
\subjclass[2000]{ 53C07, 53C55, 55R10.}
\keywords{Almost paracomplex structure, connection, cotangent bundle,
paraholomorphic tensor field, Riemannian metric.}

\begin{abstract}
Let $(M,g)$ be an $n-$dimensional Riemannian manifold and $T^{\ast }M$ be
its cotangent bundle equipped with a Riemannian metric of Cheeger Gromoll
type which rescale the horizontal part by a nonzero differentiable function.
The main purpose of the present paper is to discuss curvature properties of $%
T^{\ast }M$ and construct almost paracomplex Norden structures on $T^{\ast
}M.$ We investigate conditions for these structures to be para-K\"{a}hler
(paraholomorphic) and quasi-K\"{a}hler. Also, some properties of almost
paracomplex Norden structures in context of almost product Riemannian
manifolds are presented.
\end{abstract}

\maketitle

\section{\protect\bigskip \textbf{Introduction}}

\noindent Geometric structures on bundles have been object of much study
since the middle of the last century. The natural lifts of the metric $g$,
from a Riemannian manifold $(M,g)$ to its tangent or cotangent bundles,
induce new (pseudo) Riemannian structures, with interesting geometric
properties. Maybe the best known Riemannian metric $^{S}g$ on the tangent
bundle over Riemannian manifold $(M,g)$ is that introduced by Sasaki in 1958
(see \cite{Sasaki}), but in most cases the study of some geometric
properties of the tangent bundle endowed with this metric led to the
flatness of the base manifold. This metric $^{S}g$ is a standard notion in
differential geometry called the Sasaki metric. The Sasaki metric $^{S}g$
has been extensively studied by several authors and in many different
contexts. Another Riemannian metric $^{CG}g$ on the tangent bundle $TM$ had
been defined, some years before, by E. Musso and F. Tricerri \cite{Musso}
who, inspired by the paper \cite{Cheeger} of J. Cheeger and D. Gromoll,
called it the Cheeger-Gromoll metric. The metric was defined by J. Cheeger
and D. Gromoll; yet, there were E. Musso and F. Tricerri who wrote down its
expression, constructed it in a more \textquotedblright
comprehensible\textquotedblright\ way, and gave it the name. In \cite%
{Zayatuev1}(see also \cite{Zayatuev2,Zayatuev3}, B. V. Zayatuev introduced a
Riemannian metric $^{S}\overline{g}$ on the tangent bundle $TM$ given by 
\begin{eqnarray*}
^{S}\overline{g}\left( ^{H}X,^{H}Y\right) &=&fg\left( X,Y\right) , \\
^{S}\overline{g}\left( ^{H}X,^{V}Y\right) &=&^{S}\overline{g}\left(
^{V}X,^{H}Y\right) =0, \\
^{S}\overline{g}\left( ^{V}X,^{V}Y\right) &=&g\left( X,Y\right) ,
\end{eqnarray*}%
where $f>0$, $f\in C^{\infty }(M).$ For $f=1$, it follows that $^{S}%
\overline{g}=^{S}g,$ i.e. the metric $^{S}\overline{g}$ is a generalization
of the Sasaki metric $^{S}g$. Also, the authors studied the rescaled Sasaki
type metric on the cotangent bundle $T^{\ast }M$ over Riemannian manifold $%
(M,g)$ (see \cite{Gezer2}).

Almost complex Norden and almost paracomplex Norden structures are among the
most important geometrical structures which can be considered on a manifold.
Let $M_{2k}$ be a $2k$-dimensional differentiable manifold endowed with an
almost (para) complex structure $\varphi $ and a pseudo-Riemannian metric $g$
of signature $(k,k)$ such that $g(\varphi X,Y)=g(X,\varphi Y)$, i.e. $g$ is
pure with respect to $\varphi $ for arbitrary vector fields $X$ and $Y$ on $%
M_{2k}$. Then the metric $g$ is called Norden metric. Norden metrics are
referred to as anti-Hermitian metrics or $B$-metrics. They find widespread
application in mathematics as well as in theoretical physics. Many authors
considered almost (para)complex Norden structures on the tangent, cotangent
and tensor bundles \cite%
{Druta1,Gezer1,Olszak,Oproiu1,Oproiu2,Oproiu3,Oproiu4,Papag1,Papag2,Salimov2,Salimov3}%
.

Let $T^{\ast }M$ be the cotangent bundle of a Riemannian manifold $(M,g)$.
We define $r^{2}=g^{-1}\left( p,p\right) =g^{i}{}^{j}p_{i}p_{j}$ \ and put $%
\alpha =1+r^{2}$. Then the rescaled Riemannian metric of Cheeger Gromoll
type $^{CG}g_{f}$ is defined on $T^{\ast }M$ by the following three equations%
\begin{equation}
^{CG}g_{f}\left( {}^{V}\omega ,{}^{V}\theta \right) ={}\frac{1}{\alpha }%
(g^{-1}\left( \omega ,\theta \right) +g^{-1}\left( \omega ,p\right)
g^{-1}\left( \theta ,p\right) ),  \label{A1.1}
\end{equation}%
\begin{equation}
{}^{CG}g_{f}\left( {}^{V}\omega ,{}^{H}Y\right) =0,  \label{A1.2}
\end{equation}%
\begin{equation}
{}^{CG}g_{f}\left( {}^{H}X,{}^{H}Y\right) ={}fg\left( X,Y\right)
\label{A1.3}
\end{equation}%
for any $X,Y\in \Im _{0}^{1}\left( M\right) $ and $\omega ,\theta \in \Im
_{1}^{0}\left( M\right) $, where $f>0$, $f\in C^{\infty }(M)$, $g^{-1}\left(
\omega ,\theta \right) =g^{i}{}^{j}\omega _{i}\theta _{j}$ for all $\omega $%
, $\theta \in \Im _{1}^{0}\left( M\right) $ (for $f=1,$ see \cite{Agca}). In
this paper, firstly, curvature tensor of the rescaled Cheeger Gromoll type
metric $^{CG}g_{f}$ \ are presented. Secondly, we get the conditions under
which the cotangent bundle endowed with some paracomplex structures and the
rescaled Riemannian metric of Cheeger Gromoll type $^{CG}g_{f}$ is a
paraholomorphic Norden manifold. Finally, for an almost paracomplex manifold
to be an specialized almost product manifold, we give some results relation
to Riemannian almost product structure on the cotangent bundle.

Throughout this paper, all manifolds, tensor fields and connections are
always assumed to be differentiable of class $C^{\infty }$. Also, we denote
by $\Im _{q}^{p}(M)$ the set of all tensor fields of type $(p,q)$ on $M$,
and by $\Im _{q}^{p}(T^{\ast }M)$ the corresponding set on the cotangent
bundle $T^{\ast }M$. The Einstein summation convention is used, the range of
the indices $i,j,s$ being always $\{1,2,...,n\}.$

\section{Preliminaries}

\subsection{The cotangent bundle}

The cotangent bundle of a smooth $n-$dimensional Riemannian manifold may be
endowed with a structure of $2n-$dimensional smooth manifold, induced by the
structure on the base manifold. If $(M,g)$ is a smooth Riemannian manifold
of dimension $n$, we denote its cotangent bundle by $\pi :T^{\ast
}M\rightarrow M.$ A system of local coordinates $\left( U,x^{i}\right) ,%
\mathrm{\;}i=1,...,n$ in $M$ induces on ${}T^{\ast }M$ a system of local
coordinates $\left( \pi ^{-1}\left( U\right) ,\mathrm{\;}x^{i},\mathrm{\;}x^{%
\overline{i}}=p_{i}\right) ,\mathrm{\;}\overline{i}=n+i=n+1,...,2n$, where $%
x^{\overline{i}}=p_{i}$ is the components of covectors $p$ in each cotangent
space ${}T_{x}^{\ast }M,\mathrm{\;}x\in U$ with respect to the natural
coframe $\left\{ dx^{i}\right\} $.

Let $X=X^{i}\frac{\partial }{\partial x^{i}}$ and $\omega =\omega _{i}dx^{i}$
be the local expressions in $U$ of a vector field $X$ \ and a covector
(1-form) field $\omega $ on $M$, respectively. Then the vertical lift $%
^{V}\omega $ of $\omega $ and the horizontal lift $^{H}X$ of $X$ are given,
with respect to the induced coordinates, by\noindent 
\begin{equation}
^{V}\omega =\omega _{i}\partial _{\overline{i}},  \label{A2.1}
\end{equation}%
and

\begin{equation}
^{H}X=X^{i}\partial _{i}+p_{h}\Gamma _{ij}^{h}X^{j}\partial _{\overline{i}},
\label{A2.2}
\end{equation}%
where $\partial _{i}=\frac{\partial }{\partial x^{i}}$, $\partial _{%
\overline{i}}=\frac{\partial }{\partial x^{\overline{i}}}$ and $\Gamma
_{ij}^{h}$ are the coefficients of the Levi-Civita connection $\nabla $ of $%
g $.

The Lie bracket operation of vertical and horizontal vector fields on $%
T^{\ast }M$ is given by the formulas%
\begin{equation}
\left\{ 
\begin{array}{l}
{\left[ ^{H}X,^{H}Y\right] ={}^{H}\left[ X,Y\right] +^{V}\left( p\circ
R(X,Y)\right) } \\ 
{\left[ ^{H}X,^{V}\omega \right] ={}^{V}\left( \nabla _{X}\omega \right) }
\\ 
{\left[ ^{V}\theta ,^{V}\omega \right] =0}%
\end{array}%
\right.  \label{A2.3}
\end{equation}%
for any $X,$ $Y$ $\Im _{0}^{1}(M)$ and $\theta $, $\omega \in \Im
_{1}^{0}(M) $, where $R$ is the Riemannian curvature of $g$ defined by $%
R\left( X,Y\right) =\left[ \nabla _{X},\nabla _{Y}\right] -\nabla _{\left[
X,Y\right] }$ (for details, see \cite{YanoIshihara:DiffGeo})$.$

\subsection{Expressions in the adapted frame}

We insert the adapted frame which allows the tensor calculus to be
efficiently done in $T^{\ast }M.$ With the connection $\nabla $ of $g$ on $M$%
, we can introduce adapted frames on each induced coordinate neighborhood $%
\pi ^{-1}(U)$ of $T^{\ast }M$. In each local chart $U\subset M$, we write $%
X_{(j)}=\dfrac{\partial }{\partial x^{j}},$ $\theta ^{(j)}=dx^{j},$ $%
j=1,...,n.$ Then from (\ref{A2.1}) and (\ref{A2.2}), we see that these
vector fields have, respectively, local expressions 
\begin{equation*}
^{H}X_{(j)}=\frac{\partial }{\partial x^{j}}+p_{a}\Gamma _{hj}^{a}\partial _{%
\overline{h}}
\end{equation*}%
\begin{equation*}
^{V}\theta ^{(j)}=\frac{\partial }{\partial x\overline{^{j}}}
\end{equation*}%
with respect to the natural frame $\left\{ \frac{\partial }{\partial x^{j}},%
\frac{\partial }{\partial x\overline{^{j}}}\right\} $. These $2n$ vector
fields are linearly independent and they generate the horizontal
distribution of $\nabla _{g}$ and the vertical distribution of $T^{\ast }M$,
respectively. We call the set $\left\{ ^{H}X_{(j)},^{V}\theta ^{(j)}\right\} 
$ the frame adapted to the connection $\nabla $ of $g$ in $\pi
^{-1}(U)\subset T^{\ast }M$. By denoting%
\begin{eqnarray}
E_{j} &=&^{H}X_{(j)},  \label{A2.4} \\
E_{\overline{j}} &=&^{V}\theta ^{(j)},  \notag
\end{eqnarray}%
we can write the adapted frame as $\left\{ E_{\alpha }\right\} =\left\{
E_{j},E_{\overline{j}}\right\} $. The indices $\alpha ,\beta ,\gamma
,...=1,...,2n$ indicate the indices with respect to the adapted frame.

Using (\ref{A2.1}), (\ref{A2.2}) and (\ref{A2.4}), we have%
\begin{equation}
^{V}\omega =\left( 
\begin{array}{l}
0 \\ 
\omega _{j}%
\end{array}%
\right) ,  \label{A2.5}
\end{equation}%
and

\begin{equation}
^{H}X=\left( 
\begin{array}{l}
X^{j} \\ 
0%
\end{array}%
\right)  \label{A2.6}
\end{equation}%
with respect to the adapted frame $\left\{ E_{\alpha }\right\} $ (for
details, see \cite{YanoIshihara:DiffGeo}). By the straightforward
calculations, we have the lemma below.

\begin{lemma}
\label{Lemma1}The Lie brackets of the adapted frame of $T^{\ast }M$ satisfy
the following identities:%
\begin{eqnarray*}
\left[ E_{i},E_{j}\right] &=&p_{s}R_{ijl}^{\text{ \ \ }s}E_{\overline{l}}, \\
\left[ E_{i},E_{\overline{j}}\right] &=&\Gamma _{il}^{j}E_{\overline{l}}, \\
\left[ E_{\overline{i}},E_{\overline{j}}\right] &=&0
\end{eqnarray*}

where $R_{ijl}^{\text{ \ \ }s}$ denote the components of the curvature
tensor of $(M,g)$.\bigskip
\end{lemma}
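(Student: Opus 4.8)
The plan is to compute each Lie bracket directly from the definitions of the adapted frame in terms of the natural frame, namely $E_i = \partial_i + p_a\Gamma_{hi}^a\partial_{\overline h}$ and $E_{\overline i} = \partial_{\overline i}$, and then re-express the resulting natural-frame vector fields back in the adapted frame. The crucial observation is that $\partial_{\overline i} = E_{\overline i}$ already, so vertical directions translate trivially; the only real work is controlling the horizontal generators $E_i$, whose coefficient $p_a\Gamma_{hi}^a$ depends on both the base point (through $\Gamma$) and the fibre coordinate $p_a$.

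First I would treat $[E_{\overline i}, E_{\overline j}]$, which is immediate: both equal coordinate vector fields $\partial_{\overline i}, \partial_{\overline j}$ on $T^\ast M$, so their bracket vanishes. Next I would compute $[E_i, E_{\overline j}]$. Since $E_{\overline j} = \partial_{\overline j}$ annihilates everything not depending on $p$, and acting on the coefficient $p_a\Gamma_{hi}^a$ it simply differentiates with respect to $p_j$, producing $\Gamma_{hi}^j\partial_{\overline h}$; meanwhile $E_i$ acting on the constant-in-$x^k$ field $\partial_{\overline j}$ contributes nothing in the horizontal slot. Collecting terms and relabelling the summation index should yield $[E_i, E_{\overline j}] = \Gamma_{il}^j E_{\overline l}$, matching the claim after reconciling index placement in $\Gamma$.

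The main obstacle, and the computation I would carry out most carefully, is $[E_i, E_j]$. Writing $E_i = \partial_i + p_a\Gamma_{hi}^a\partial_{\overline h}$ and expanding the bracket in the natural frame, the horizontal parts $[\partial_i,\partial_j] = 0$ drop out, and what survives is the difference $E_i(p_a\Gamma_{hj}^a) - E_j(p_a\Gamma_{hi}^a)$ in the $\partial_{\overline h}$ slot, together with the cross terms coming from $p_a\Gamma_{hi}^a\partial_{\overline h}$ acting on the coefficient of $E_j$ (and vice versa). Since $\partial_{\overline h}$ differentiates $p_b$ to give $\delta_b^h$, these produce terms like $p_a\Gamma_{hi}^a\Gamma_{kj}^h$ minus its $i\leftrightarrow j$ swap, while the $\partial_i(p_a\Gamma_{hj}^a) = p_a\partial_i\Gamma_{hj}^a$ terms contribute the derivative part. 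Organising these into the antisymmetric combination $\partial_i\Gamma_{hj}^a - \partial_j\Gamma_{hi}^a + \Gamma_{hj}^b\Gamma_{bi}^a - \Gamma_{hi}^b\Gamma_{bj}^a$ should reconstruct precisely the curvature components $R_{ij h}^{\phantom{ijh}a}$ of $g$, so that after writing the result in the adapted frame I obtain $[E_i, E_j] = p_s R_{ijl}^{\phantom{ijl}s} E_{\overline l}$.

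The only subtlety beyond bookkeeping is the sign and index convention for $R$: I would verify at the end that the antisymmetric Christoffel expression agrees with the paper's stated curvature convention $R(X,Y) = [\nabla_X,\nabla_Y] - \nabla_{[X,Y]}$, and adjust the placement of the contracted index so that the result lands exactly as $p_s R_{ijl}^{\phantom{ijl}s} E_{\overline l}$. All three identities then follow by straightforward substitution, with no analytic input beyond the smoothness already assumed and the coordinate expressions \eqref{A2.4} for the adapted frame.
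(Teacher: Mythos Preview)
Your proposal is correct and is precisely the ``straightforward calculation'' the paper alludes to in lieu of a written proof; the paper gives no further argument beyond that phrase. Your plan to expand each bracket in the natural frame using $E_i=\partial_i+p_a\Gamma_{hi}^a\partial_{\overline h}$, $E_{\overline i}=\partial_{\overline i}$, and then recognise the curvature combination $\partial_i\Gamma^{a}_{hj}-\partial_j\Gamma^{a}_{hi}+\Gamma^{a}_{bi}\Gamma^{b}_{hj}-\Gamma^{a}_{bj}\Gamma^{b}_{hi}$ in the $[E_i,E_j]$ term, is exactly the intended computation, and your caveat about checking the sign and index conventions for $R$ at the end is well placed.
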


\section{\protect\bigskip Curvature tensor of the rescaled Riemannian metric
of Cheeger Gromoll type}

From the equations (\ref{A1.1})-(\ref{A1.3}), by virtue of (\ref{A2.5}) and (%
\ref{A2.6}), the rescaled Cheeger Gromoll type metric $^{CG}g_{f}$ has
components with respect to the adapted frame $\left\{ E_{\alpha }\right\} $:%
\begin{equation}
^{CG}g_{f}=\left( 
\begin{array}{cc}
{fg_{ij}} & {0} \\ 
{0} & \frac{1}{\alpha }({g}^{ij}+{g}^{is}{g}^{tj}p_{s}p_{t})%
\end{array}%
\right) \text{ .}  \label{A33.1}
\end{equation}

For the Levi-Civita connection of the rescaled Cheeger Gromoll type metric $%
^{CG}g_{f}$ we give the next theorem.

\begin{theorem}
\label{propo1} Let $(M,g)$ be a Riemannian manifold and equip its cotangent
bundle $T^{\ast }M$ with the rescaled Cheeger Gromoll type metric $%
^{CG}g_{f} $. Then the corresponding Levi-Civita connection $\widetilde{%
\nabla }$ satisfies the followings:%
\begin{equation}
\left\{ 
\begin{array}{l}
i)\text{ }\widetilde{\nabla }_{E_{i}}E_{j}=\{\Gamma
_{ij}^{l}+{}^{f}A_{ij}^{l}\}E_{l}+\dfrac{1}{2}p_{s}R_{ijl}^{\text{ \ \ }s}E_{%
\overline{l}}, \\ 
ii)\text{ }\widetilde{\nabla }_{E_{i}}E_{\overline{j}}=\dfrac{1}{2f\alpha }%
p_{s}R_{.\text{ }i\text{ .}}^{l\text{ \ \ }js}E_{l}-\Gamma _{il}^{j}E_{%
\overline{l}}, \\ 
iii)\text{ }\widetilde{\nabla }_{E\overline{_{i}}}E_{j}=\dfrac{1}{2f\alpha }%
p_{s}R_{.\text{ }j\text{ .}}^{l\text{ \ \ }is}E_{l}, \\ 
iv)\text{ }\widetilde{\nabla }_{E\overline{_{i}}}E_{\overline{j}}=\{\frac{-1%
}{\alpha }(p^{i}\delta _{l}^{j}+p^{j}\delta _{l}^{i})+\frac{\alpha +1}{%
\alpha ^{2}}g^{ij}p_{l}+\frac{1}{\alpha ^{2}}p^{i}p^{j}p_{l}\}E_{\overline{l}%
}%
\end{array}%
\right.  \label{A33.2}
\end{equation}%
with respect to the adapted frame, where ${}^{f}A_{ji}^{h}$ is a tensor
field of type $(1,2)$ defined by ${}^{f}A_{ji}^{h}=\frac{1}{f}(f_{j}\delta
_{i}^{h}+f_{i}\delta _{j}^{h}-f_{.}^{m}g_{ji})$ and $p^{i}=g^{it}p_{t}$, $%
R_{.}^{k}{}_{j\text{ }.}{}^{i}{}^{s}=g^{kt}g^{im}R_{tjm}^{\text{ \ \ }s}.$
\end{theorem}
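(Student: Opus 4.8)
The plan is to read off $\widetilde{\nabla}$ from the Koszul formula written directly in the adapted (non-holonomic) frame $\{E_{\alpha}\}=\{E_{i},E_{\overline{i}}\}$, using the structure relations of Lemma \ref{Lemma1} for the Lie brackets and the components (\ref{A33.1}) for the metric. Concretely, for frame fields $X,Y,Z\in\{E_{\alpha}\}$ I would use
\begin{equation*}
2\,{}^{CG}g_{f}(\widetilde{\nabla}_{X}Y,Z)=X\,{}^{CG}g_{f}(Y,Z)+Y\,{}^{CG}g_{f}(X,Z)-Z\,{}^{CG}g_{f}(X,Y)+{}^{CG}g_{f}([X,Y],Z)-{}^{CG}g_{f}([Y,Z],X)+{}^{CG}g_{f}([Z,X],Y),
\end{equation*}
with the three brackets replaced by the identities of Lemma \ref{Lemma1}. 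For each ordered pair $(X,Y)$ I would test against $Z=E_{k}$ and against $Z=E_{\overline{k}}$ separately; since the matrix (\ref{A33.1}) is block diagonal, testing against $E_{k}$ isolates the horizontal component (to be extracted with the horizontal inverse $\frac{1}{f}g^{ij}$) and testing against $E_{\overline{k}}$ isolates the vertical component (to be extracted with the inverse of the Cheeger--Gromoll block, namely $\alpha g_{ij}-p_{i}p_{j}$).

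First I would record the handful of frame derivatives that feed the formula. Because $g_{ij}$, $f$ and the $\Gamma_{ij}^{h}$ live on the base, $E_{\overline{i}}$ annihilates them and $E_{i}$ acts on them through $\partial_{i}$ only. The key structural fact is that the energy $r^{2}=g^{ij}p_{i}p_{j}$, hence $\alpha$, is horizontally constant, $E_{i}\alpha=0$, since the $\partial_{i}g^{ij}$ contribution cancels against the $p_{h}\Gamma$ part of $E_{i}$. On the vertical side $E_{\overline{i}}\alpha=2p^{i}$, $E_{\overline{i}}p_{j}=\delta_{j}^{i}$ and $E_{\overline{i}}p^{j}=g^{ij}$; on the horizontal side $E_{i}p^{j}=-\Gamma_{il}^{j}p^{l}$ and $E_{i}g^{jk}=-g^{ja}\Gamma_{ia}^{k}-g^{ka}\Gamma_{ia}^{j}$. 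These identities are exactly what turn the nine Koszul terms into the closed expressions of (\ref{A33.2}).

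Then I would treat the four cases in turn. For (i) the bracket $[E_{i},E_{j}]$ is purely vertical, so testing against $E_{k}$ kills all bracket terms and leaves the Koszul combination of $E_{i}(fg_{jk})$ etc.; this is precisely the first-kind Christoffel symbol of the rescaled base metric $fg$ and returns $\Gamma_{ij}^{l}$ together with the rescaling correction $^{f}A_{ij}^{l}$, while testing against $E_{\overline{k}}$ retains only the curvature bracket and yields the vertical term $\frac{1}{2}p_{s}R_{ijl}^{\ \ \ s}E_{\overline{l}}$. For (ii) and (iii) the horizontal component arises solely from the bracket $[E_{k},E_{i}]=p_{s}R_{kil}^{\ \ \ s}E_{\overline{l}}$ paired with $E_{\overline{j}}$; here the $\frac{1}{\alpha}$ comes from the vertical block of (\ref{A33.1}) and the $\frac{1}{f}$ from raising the horizontal index, giving the coefficient $\frac{1}{2f\alpha}$, and the $p^{l}p^{j}$ part of that block drops out because $R$ is skew in its last two lowered indices while $p^{l}p^{j}$ is symmetric, leaving exactly $\frac{1}{2f\alpha}p_{s}R_{.\ i\ .}^{l\ \ \ js}E_{l}$. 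The vertical component of (ii) collapses to $-\Gamma_{il}^{j}E_{\overline{l}}$, and (iii) then follows from (ii) by the symmetry (torsion-freeness) of $\widetilde{\nabla}$, the remaining vertical contributions cancelling. In case (iv) every structure function vanishes, the horizontal component is seen to vanish after the $\Gamma$-cancellations, and what survives is the intrinsic Levi-Civita connection of the Cheeger--Gromoll fibre metric.

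I expect the fibre case (iv) to be the main obstacle. It requires differentiating $\frac{1}{\alpha}(g^{ij}+p^{i}p^{j})$ in the $p$-variables, carefully tracking the $\frac{1}{\alpha}$ and $\frac{1}{\alpha^{2}}$ factors produced by $E_{\overline{i}}\alpha=2p^{i}$, and then contracting the symmetric Koszul expression against the inverse fibre metric $\alpha g_{lm}-p_{l}p_{m}$ to assemble the three-term coefficient $-\frac{1}{\alpha}(p^{i}\delta_{l}^{j}+p^{j}\delta_{l}^{i})+\frac{\alpha+1}{\alpha^{2}}g^{ij}p_{l}+\frac{1}{\alpha^{2}}p^{i}p^{j}p_{l}$. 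Throughout I would use the symmetry of the Levi-Civita connection and torsion-freeness as running consistency checks, since the signs in the adapted-frame Koszul formula are the easiest place to slip.
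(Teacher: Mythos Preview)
Your approach is correct and essentially identical to the paper's: the authors likewise invoke the Koszul formula in the adapted frame $\{E_{\alpha}\}$, citing Lemma~\ref{Lemma1} for the brackets and the block form (\ref{A33.1}) of the metric, and then omit the ``standard calculations.'' The only nominal difference is that the paper lists the first Bianchi identity for $R$ among the ingredients, whereas you dispose of the extra $p^{l}p^{j}$ contributions via the antisymmetry of $R$ in its last two lowered indices; either curvature symmetry suffices in this frame computation.
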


\begin{proof}
The connection \bigskip ${}\widetilde{{\nabla }}$ is characterized by the
Koszul formula:%
\begin{eqnarray*}
2^{CG}g_{f}({}\widetilde{{\nabla }}_{\widetilde{X}}\widetilde{Y},\widetilde{Z%
}) &=&\widetilde{X}(^{CG}g_{f}(\widetilde{Y},\widetilde{Z}))+\widetilde{Y}%
(^{CG}g_{f}(\widetilde{Z},\widetilde{X}))-\widetilde{Z}(^{CG}g_{f}(%
\widetilde{X},\widetilde{Y})) \\
-^{CG}g_{f}(\widetilde{X},[\widetilde{Y},\widetilde{Z}]) &+&^{CG}g_{f}(%
\widetilde{Y},[\widetilde{Z},\widetilde{X}])+\text{ }^{CG}g_{f}(\widetilde{Z}%
,[\widetilde{X},\widetilde{Y}])
\end{eqnarray*}%
for all vector fields $\widetilde{X},\widetilde{Y}$ and $\widetilde{Z}$ on $%
T^{\ast }M$. One can verify the Koszul formula for pairs $\widetilde{X}=$ $%
E_{i},E_{\overline{i}}$ and $\widetilde{Y}=$ $E_{j},E_{\overline{j}}$ and $%
\widetilde{Z}=$ $E_{k},E_{\overline{k}}$. In calculations, the formulas (\ref%
{A2.4}), Lemma \ref{Lemma1} and the first Bianchi identity for $R$ should be
applied. We omit standart calculations.
\end{proof}

Let $\tilde{X},\;\tilde{Y}\in \Im _{0}^{1}(T^{\ast }M)$. Then the covariant
derivative $\tilde{\nabla}_{\tilde{Y}}\tilde{X}$ has components

\begin{equation*}
\tilde{\nabla}_{\tilde{Y}}\tilde{X}^{\alpha }=\tilde{Y}^{\gamma }E_{\gamma }%
\tilde{X}^{\alpha }+\tilde{\Gamma}_{\gamma \beta }^{\alpha }\tilde{X}^{\beta
}\tilde{Y}^{\gamma }
\end{equation*}%
with respect to the adapted frame $\left\{ E_{\alpha }\right\} $. \noindent
Using (\ref{A2.4}), (\ref{A2.5}), (\ref{A2.6}) and (\ref{A33.2}), we have
the following proposition.

\begin{proposition}
\label{propo2}Let $(M,g)$ be a Riemannian manifold\textit{\ and }${}%
\widetilde{\nabla }$\textit{\ be the Levi-Civita connection of the cotangent
bundle }$T^{\ast }M$\textit{\ equipped with the rescaled Cheeger Gromoll
type metric ${}$}$^{CG}g_{f}$\textit{. Then }${}$ 
\begin{equation*}
\begin{array}{l}
{i)\mathrm{\;\;}{}}\widetilde{{\nabla }}{_{{}^{H}X}{}^{H}Y={}^{H}\left(
\nabla _{X}Y+^{f}A(X,Y)\right) +\frac{1}{2}{}^{V}\left( p\circ R\left(
X,Y\right) \right) ,} \\ 
{ii)\mathrm{\;\;}{}}\widetilde{{\nabla }}{_{{}^{H}X}{}^{V}\theta ={}\frac{1}{%
2f\alpha }{}^{H}\left( p\left( g^{-1}\circ R\left( \mathrm{\;},X\right) 
\widetilde{\theta }\right) \right) +^{V}\left( \nabla _{X}\theta \right) ,}
\\ 
{iii)\mathrm{\;\;}}\widetilde{{\nabla }}{_{{}^{V}\omega }{}^{H}Y=\frac{1}{%
2f\alpha }{}^{H}\left( p\left( g^{-1}\circ R\left( \mathrm{\;,}Y\right) 
\tilde{\omega}\right) \right) ,} \\ 
{iv)\mathrm{\;\;\;}{}}\widetilde{{\nabla }}{_{{}^{V}\omega }{}^{V}\theta =-%
\frac{1}{\alpha }\left( {}^{CG}g\left( {}^{V}\omega ,\gamma \delta \right)
{}^{V}\theta +{}^{CG}g_{f}\left( {}^{V}\theta ,\gamma \delta \right)
{}^{V}\omega \right) +\frac{\alpha +1}{\alpha }{}^{CG}g}_{f}{\left(
{}^{V}\omega ,{}^{V}\theta \right) \gamma \delta } \\ 
{\mathrm{\;\;\;\;\;\;\;\;\;\;\;\;\;\;\;\ }-\frac{1}{\alpha }{}^{CG}g}_{f}{%
\left( {}^{V}\omega ,\gamma \delta \right) {}^{CG}g}_{f}{\left( {}^{V}\theta
,\gamma \delta \right) \gamma \delta }%
\end{array}%
\end{equation*}%
\noindent \textit{for all} $X,Y\in \Im _{0}^{1}\left( M\right) $, $\omega
,\theta \in \Im _{1}^{0}\left( M\right) $\textit{, where }$\tilde{\omega}%
=g^{-1}\circ \omega \in \Im _{0}^{1}\left( M\right) ,$ $R\left( \mathrm{\;,}%
X\right) \tilde{\omega}\in \Im _{1}^{1}\left( M\right) ,$ $g^{-1}\circ
R\left( \mathrm{\;,}X\right) \tilde{\omega}\in \Im _{0}^{1}\left( M\right) $%
, $R$ \textit{and }$\gamma \delta $ \textit{denote respectively the
curvature tensor of }$\nabla $\textit{\ and the canonical or Liouville
vector }field\textit{\ on }$T^{\ast }M$\textit{\ with the local expression }$%
\gamma \delta =p_{i}E_{\overline{i}}$\textit{\ (for }$f=1,$ see \cite{Agca}).
\end{proposition}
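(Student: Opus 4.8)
The plan is to translate the adapted-frame formulas of Theorem \ref{propo1} into intrinsic horizontal/vertical-lift form by substituting the decompositions $^{H}X=X^{i}E_{i}$ and $^{V}\omega=\omega_{i}E_{\overline{i}}$ coming from (\ref{A2.5})--(\ref{A2.6}) into the anholonomic covariant-derivative formula displayed just above, whose connection coefficients $\widetilde{\Gamma}_{\gamma\beta}^{\alpha}$ are read off directly from (\ref{A33.2}). The only extra input required is the action of the frame fields on functions: since $E_{i}={}^{H}X_{(i)}=\partial_{i}+p_{a}\Gamma_{hi}^{a}\partial_{\overline{h}}$ acts on any function pulled back from $M$ as the ordinary partial derivative $\partial_{i}$, while $E_{\overline{i}}=\partial_{\overline{i}}$ annihilates such functions, the derivative terms $\widetilde{Y}^{\gamma}E_{\gamma}\widetilde{X}^{\alpha}$ collapse to genuine partial derivatives of the components $Y^{j},\omega_{j},\theta_{j}$.

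First I would treat (i): expanding $\widetilde{\nabla}_{{}^{H}X}{}^{H}Y=X^{i}E_{i}(Y^{l})E_{l}+X^{i}Y^{j}\widetilde{\nabla}_{E_{i}}E_{j}$ and inserting Theorem \ref{propo1}(i), the piece $X^{i}\partial_{i}Y^{l}+\Gamma_{ij}^{l}X^{i}Y^{j}$ is exactly the $E_{l}$-component of $^{H}(\nabla_{X}Y)$, the $^{f}A_{ij}^{l}X^{i}Y^{j}$ piece assembles into $^{H}({}^{f}A(X,Y))$, and $\tfrac12 p_{s}R_{ijl}{}^{s}X^{i}Y^{j}$ is the $E_{\overline{l}}$-component of $\tfrac12{}^{V}(p\circ R(X,Y))$. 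Parts (ii) and (iii) are analogous: in (ii) the $E_{\overline{l}}$-part $X^{i}(\partial_{i}\theta_{l}-\Gamma_{il}^{j}\theta_{j})$ is the vertical lift of $\nabla_{X}\theta$, while the $E_{l}$-part, after writing $R_{.\ i\ .}^{l\ \ js}=g^{lt}g^{jm}R_{tim}{}^{s}$ and absorbing $\theta_{j}$ into $\widetilde{\theta}^{m}=g^{mj}\theta_{j}$, is recognised as the stated horizontal lift; (iii) runs the same way, except that $E_{\overline{i}}(Y^{j})=0$ kills the derivative term, leaving only the curvature horizontal lift.

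The main effort is part (iv). Here $\widetilde{\nabla}_{{}^{V}\omega}{}^{V}\theta=\omega_{i}\theta_{j}\,\widetilde{\nabla}_{E_{\overline{i}}}E_{\overline{j}}$, both derivative terms vanishing, so the entire content is to show that contracting Theorem \ref{propo1}(iv) with $\omega_{i}\theta_{j}$ reproduces the four-term intrinsic expression. The key preliminary identity is $^{CG}g_{f}({}^{V}\omega,\gamma\delta)=g^{-1}(\omega,p)$: indeed $\gamma\delta=p_{i}E_{\overline{i}}$ is the vertical lift of $p$, so (\ref{A1.1}) together with $g^{-1}(p,p)=r^{2}=\alpha-1$ gives $\tfrac1\alpha\bigl(g^{-1}(\omega,p)+g^{-1}(\omega,p)(\alpha-1)\bigr)=g^{-1}(\omega,p)$. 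Using this, the two terms $-\tfrac1\alpha p^{i}\delta_{l}^{j}$ and $-\tfrac1\alpha p^{j}\delta_{l}^{i}$ of (\ref{A33.2})(iv) match $-\tfrac1\alpha\bigl({}^{CG}g_{f}({}^{V}\omega,\gamma\delta){}^{V}\theta+{}^{CG}g_{f}({}^{V}\theta,\gamma\delta){}^{V}\omega\bigr)$ directly.

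The one delicate bookkeeping point is the coefficient of $g^{-1}(\omega,p)g^{-1}(\theta,p)p_{l}$, which I expect to be fed by \emph{two} intrinsic terms simultaneously, namely $\tfrac{\alpha+1}{\alpha}{}^{CG}g_{f}({}^{V}\omega,{}^{V}\theta)\gamma\delta$ and $-\tfrac1\alpha{}^{CG}g_{f}({}^{V}\omega,\gamma\delta){}^{CG}g_{f}({}^{V}\theta,\gamma\delta)\gamma\delta$; their contributions combine through the identity $\tfrac{\alpha+1}{\alpha^{2}}-\tfrac1\alpha=\tfrac{1}{\alpha^{2}}$ to yield precisely the $\tfrac{1}{\alpha^{2}}p^{i}p^{j}p_{l}$ term, while the surviving $g^{-1}(\omega,\theta)p_{l}$ piece comes solely from $\tfrac{\alpha+1}{\alpha^{2}}g^{ij}p_{l}$. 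Disentangling which intrinsic term feeds each monomial in $p$ is the only place demanding care; everything else is the routine substitution of Theorem \ref{propo1} into the component formula, which I would not write out in full.
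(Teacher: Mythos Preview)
Your proposal is correct and follows exactly the route indicated in the paper: the sentence preceding the proposition already states that one simply substitutes (\ref{A2.4}), (\ref{A2.5}), (\ref{A2.6}) and the connection coefficients (\ref{A33.2}) into the anholonomic covariant-derivative formula, which is precisely what you do. Your explicit verification of part (iv), including the identity $^{CG}g_{f}({}^{V}\omega,\gamma\delta)=g^{-1}(\omega,p)$ and the recombination $\tfrac{\alpha+1}{\alpha^{2}}-\tfrac{1}{\alpha}=\tfrac{1}{\alpha^{2}}$, supplies details the paper omits but changes nothing in the strategy.
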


\bigskip The Riemannian curvature tensor $\widetilde{R}$ of $T^{\ast }M$
with the rescaled Cheeger Gromoll type metric\textit{\ ${}$}$^{CG}g_{f}$ is
obtained from the well-known formula%
\begin{equation*}
\widetilde{R}\left( \widetilde{X},\widetilde{Y}\right) \widetilde{Z}=%
\widetilde{\nabla }_{\widetilde{X}}\widetilde{\nabla }_{\widetilde{Y}}%
\widetilde{Z}-\widetilde{\nabla }_{\widetilde{Y}}\widetilde{\nabla }_{%
\widetilde{X}}\widetilde{Z}-\widetilde{\nabla }_{\left[ \widetilde{X},%
\widetilde{Y}\right] }\widetilde{Z}
\end{equation*}%
for all $\widetilde{X},\widetilde{Y},\widetilde{Z}\in \Im _{0}^{1}(T^{\ast
}M)$. Then from Lemma \ref{Lemma1} and Theorem \ref{propo1}, we get the
following proposition.

\begin{proposition}
\label{propo3}The components of the curvature tensor $\widetilde{R}$ of of
the cotangent bundle $T^{\ast }M$ with the rescaled Cheeger Gromoll type
metric\textit{\ ${}$}$^{CG}g_{f}$ are given as follows:%
\begin{eqnarray*}
{\,{}\widetilde{R}(E}_{l},E_{i})E_{j} &=&\{R_{lij}^{m}-\frac{1}{2f\alpha }%
p_{t}p_{a}R_{lih}^{a}{R_{.}^{m}{}_{j}{}_{.}^{h}{}^{t}+}\frac{1}{4f\alpha }%
p_{t}p_{a}({R_{.}^{m}{}_{l}{}_{.}^{h}{}^{t}}R_{ijh}^{a}-{%
R_{.}^{m}{}_{i}{}_{.}^{h}{}^{t}}R_{ljh}^{a}) \\
&&+\nabla _{l}(A_{ij}^{m})-\nabla
_{i}(A_{lj}^{m})+A_{lh}^{m}A_{ij}^{h}-A_{ih}^{m}A_{lj}^{h})\}E_{m} \\
&&+\{{\frac{1}{2f}p_{t}(\nabla _{l}R_{ijm}^{t}-\nabla _{i}R_{ljm}^{t})+}%
\frac{1}{2}p_{t}(R_{lhm}^{t}A_{ij}^{h}-R_{ihm}^{t}A_{lj}^{h})\}E_{\bar{m}} \\
{{}\,{}\widetilde{R}(E}_{\bar{l}},E_{i})E_{j} &=&\{{\frac{-1}{2f\alpha }%
p_{a}\nabla _{i}R_{.}^{m}{}_{j}{}_{.}^{l}{}^{a}+}\frac{1}{2f\alpha }p_{a}({%
R_{.}^{m}{}_{h}{}_{.}^{l}{}^{a}A}_{ij}^{h}-{R_{.}^{h}{}_{j}{}_{.}^{l}{}^{a}A}%
_{ih}^{m}+\frac{f_{i}}{f}{R_{.}^{m}{}_{j}{}_{.}^{l}{}^{a}})\}E_{m} \\
&&+\{\frac{1}{2}R_{ijm}^{l}-\frac{1}{4f\alpha }p_{t}p_{a}R_{ihm}^{t}{%
R_{.}^{h}{}_{j}{}_{.}^{l}{}^{a}-}\frac{1}{2\alpha }p_{a}p^{l}R_{ijm}^{a}-%
\frac{\alpha +1}{2\alpha ^{2}}p_{a}p_{m}{R_{ij}{}_{.}^{l}{}^{a}\}E}_{\bar{m}}
\\
{{}\widetilde{R}(E}_{l},E_{\overline{i}})E_{j} &=&\{{\frac{1}{2f\alpha }%
p_{a}\nabla _{l}R_{.}^{m}{}_{j}{}_{.}^{i}{}^{a}+\frac{1}{2f\alpha }%
p_{a}(R_{.}^{h}{}_{j}{}_{.}^{i}{}^{a}A}_{lh}^{m}-{%
R_{.}^{m}{}_{h}{}_{.}^{i}{}^{a}A}_{lj}^{h}-\frac{f_{l}}{f}{%
R_{.}^{m}{}_{j}{}_{.}^{i}{}^{a}})\}E_{m} \\
&&+\{\frac{-1}{2}R_{ljm}^{i}-\frac{1}{4f\alpha }p_{t}p_{a}R_{lhm}^{a}{%
R_{.}^{h}{}_{j}{}_{.}^{i}{}^{t}+}\frac{1}{2\alpha }p_{a}p^{i}R_{ljm}^{a}-%
\frac{\alpha +1}{2\alpha ^{2}}p_{a}p_{m}{R_{lj}{}_{.}^{i}{}^{a}\}E}_{\bar{m}}
\\
{{}\widetilde{R}(E}_{\bar{l}},E_{\overline{i}})E_{j} &=&\{\frac{1}{%
4f^{2}\alpha ^{2}}p_{t}p_{a}({%
R_{.}^{m}{}_{h}{}_{.}^{l}{}^{a}R_{.}^{h}{}_{j}{}_{.}^{i}{}^{t}-R_{.}^{m}{}_{h}{}_{.}^{i}{}^{a}R_{.}^{h}{}_{j}{}_{.}^{l}{}^{t})+%
}\frac{1}{f\alpha }{R_{.}^{m}{}_{j}{}_{.}^{i}{}^{l})} \\
&&{+}\frac{1}{f\alpha ^{2}}p_{a}(p^{i}{R_{.}^{m}{}_{j}{}_{.}^{l}{}^{a}-}p^{l}%
{R_{.}^{m}{}_{j}{}_{.}^{i}{}^{a}}\}E_{m}
\end{eqnarray*}%
\begin{eqnarray*}
{\,\,\widetilde{R}(E}_{l},E_{i})E_{\bar{j}} &=&\{{\frac{1}{2f\alpha }%
p_{a}(\nabla _{l}R_{.}^{m}{}_{i}{}_{.}^{j}{}^{a}-\nabla
_{i}R_{.}^{m}{}_{l}{}_{.}^{j}{}^{a})+}\frac{1}{2f\alpha }p_{a}({%
R_{.}^{h}{}_{i}{}_{.}^{j}{}^{a}A}_{lh}^{m}-{R_{.}^{h}{}_{l}{}_{.}^{j}{}^{a}A}%
_{ih}^{m} \\
&&-\frac{f_{l}}{f}{R_{.}^{m}{}_{i}{}_{.}^{j}{}^{a}+}\frac{f_{i}}{f}{%
R_{.}^{m}{}_{l}{}_{.}^{j}{}^{a}})\}E_{m}+\{R_{ilm}^{j}+\frac{1}{4f\alpha }%
p_{t}p_{a}(R_{lhm}^{t}{R_{.}^{h}{}_{i}{}_{.}^{j}{}^{a}} \\
&&{-R_{ihm}^{a}{R_{.}^{h}{}_{l}{}_{.}^{j}{}^{t})}+}\frac{1}{\alpha }%
p_{a}p^{j}R_{l\text{ }im}^{\text{ \ \ \ }a}-\frac{\alpha +1}{\alpha ^{2}}%
p_{a}p_{m}{R_{li}{}_{.}^{j}{}^{a}\}E}_{\bar{m}} \\
{{}{}\widetilde{R}(E}_{\bar{l}},E_{i})E_{\bar{j}} &=&\{{\frac{1}{2f\alpha }%
R_{.}^{m}{}_{i}{}_{.}^{j}{}^{l}+}\frac{1}{2f\alpha ^{2}}p_{a}(p^{l}{%
R_{.}^{m}{}_{i}{}_{.}^{j}{}^{a}+}p^{i}{R_{.}^{m}{}_{i}{}_{.}^{l}{}^{a})+}%
\frac{1}{4f^{2}\alpha ^{2}}p_{a}p_{t}{%
R_{.}^{m}{}_{h}{}_{.}^{l}{}^{a}R_{.}^{h}{}_{i}{}_{.}^{j}{}^{t}\}E}_{m} \\
{\widetilde{R}(E}_{l},E_{\overline{i}})E_{\bar{j}} &=&\{{\frac{-1}{2f\alpha }%
R_{.}^{m}{}_{l}{}_{.}^{ji}{}+}\frac{1}{2f\alpha ^{2}}p_{a}(p^{i}{%
R_{.}^{m}{}_{l}{}_{.}^{j}{}^{a}+}p^{j}{R_{.}^{m}{}_{l}{}_{.}^{i}{}^{a})-}%
\frac{1}{4f^{2}\alpha ^{2}}p_{a}p_{t}{%
R_{.}^{m}{}_{h}{}_{.}^{i}{}^{a}R_{.}^{h}{}_{l}{}_{.}^{j}{}^{t}\}E}_{m} \\
{\widetilde{R}(E}_{\bar{l}},E_{\overline{i}})E_{\bar{j}} &=&\{\frac{\alpha
^{2}+\alpha +1}{\alpha ^{3}}(g^{ij}\delta _{m}^{l}-g^{jl}\delta _{m}^{i})+%
\frac{\alpha +2}{\alpha ^{3}}(g^{lj}p^{i}p_{m}-g^{ij}p_{l}p_{m}) \\
&&+\frac{\alpha -1}{\alpha ^{3}}(\delta _{m}^{i}p^{l}p^{j}-\delta
_{m}^{l}p^{i}p^{j}){\}E}_{\bar{m}}
\end{eqnarray*}%
with respect to the adapted frame $\left\{ E_{\alpha }\right\} $ (for $f=1,$
see \cite{Agca})$.$
\end{proposition}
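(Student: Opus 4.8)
The plan is to compute every component directly from the definition
$$\widetilde{R}(\widetilde{X},\widetilde{Y})\widetilde{Z}=\widetilde{\nabla}_{\widetilde{X}}\widetilde{\nabla}_{\widetilde{Y}}\widetilde{Z}-\widetilde{\nabla}_{\widetilde{Y}}\widetilde{\nabla}_{\widetilde{X}}\widetilde{Z}-\widetilde{\nabla}_{[\widetilde{X},\widetilde{Y}]}\widetilde{Z}$$
evaluated on the adapted frame, feeding in the connection components $i)$--$iv)$ of Theorem \ref{propo1} and the Lie brackets of Lemma \ref{Lemma1}. Because $\widetilde{R}$ is skew in its first two slots it suffices to take the pair $(\widetilde{X},\widetilde{Y})$ from $(E_l,E_i)$, $(E_{\bar l},E_i)$, $(E_l,E_{\bar i})$, $(E_{\bar l},E_{\bar i})$, and then separately set $\widetilde{Z}=E_j$ and $\widetilde{Z}=E_{\bar j}$; this produces exactly the eight identities listed.

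Before expanding, I would record how the frame fields differentiate the coefficient functions occurring in $i)$--$iv)$. Since $E_i=\partial_i+p_a\Gamma^a_{hi}\partial_{\bar h}$ and $E_{\bar i}=\partial_{\bar i}$, one finds $E_i(p_s)=p_a\Gamma^a_{si}$ and $E_{\bar i}(p_s)=\delta^s_i$, while on functions of the base coordinates alone (the components $R_{ijm}{}^s$, the symbols $\Gamma^h_{ij}$, the functions $f$ and $A^h_{ij}$) the field $E_i$ acts as $\partial_i$ and $E_{\bar i}$ acts as zero. A short computation using $\nabla g^{-1}=0$ then gives $E_i(\alpha)=0$ and $E_{\bar i}(\alpha)=2p^i$, and likewise $E_i(p^j)=-\Gamma^j_{ia}p^a$. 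These elementary facts are precisely what turn the bare partial derivatives generated by the Leibniz rule into the covariant derivatives $\nabla_l R_{ijm}{}^t$ and into the covariant combinations defining $A^h_{ij}$.

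For each of the eight cases I would then expand the two iterated covariant derivatives by applying one of $i)$--$iv)$ to the inner derivative, differentiating the resulting frame expansion by the Leibniz rule, and reapplying $i)$--$iv)$ to each frame vector that appears; the bracket term is treated by substituting Lemma \ref{Lemma1} and one further application of Theorem \ref{propo1}, e.g.\ $\widetilde{\nabla}_{[E_l,E_i]}E_j=p_sR_{lih}{}^s\,\widetilde{\nabla}_{E_{\bar h}}E_j$. Collecting the $E_m$ and $E_{\bar m}$ coefficients separately yields the horizontal and vertical parts. A convenient economy, which also serves as a consistency check, is the pair symmetry of the full $(0,4)$ curvature tensor ${}^{CG}g_f(\widetilde{R}(\cdot,\cdot)\cdot,\cdot)$: it ties the components with $\widetilde{Z}=E_{\bar j}$ to those with $\widetilde{Z}=E_j$ after an index is lowered with (\ref{A33.1}), so in practice only about half of the eight expansions must be carried out from scratch.

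The main obstacle is entirely organizational: confirming that the non-tensorial pieces cancel. A single use of $i)$ or $iii)$ introduces the symbols $\Gamma^l_{ij}$ and, after the outer differentiation, their derivatives $\partial_l\Gamma$, none of which is tensorial by itself. Only once the two $\widetilde{\nabla}\widetilde{\nabla}$ terms are combined with the bracket term do the $\partial\Gamma$ and $\Gamma\Gamma$ contributions reassemble into the base curvature $R_{lij}{}^m$ and into genuine tensors in $A$, $R$ and $\nabla R$, with the antisymmetrized derivatives $\nabla_lA^m_{ij}-\nabla_iA^m_{lj}$ and $\nabla_lR_{ijm}{}^t-\nabla_iR_{ljm}{}^t$ falling out of the commutator structure automatically. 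The rest of the labour is bookkeeping: tracking the numerous $\tfrac{1}{f\alpha}$, $\tfrac{1}{f^2\alpha^2}$, $\tfrac{\alpha+1}{\alpha^2}$ and $\tfrac{1}{\alpha^2}p^ip^j$ factors coming from $iv)$, the $A$-terms carrying the $f$-derivatives, and using the skew-symmetry of $R$ together with the first Bianchi identity to collapse the curvature products. Specializing $f=1$ reproduces the formulas of \cite{Agca}, which provides a final check.
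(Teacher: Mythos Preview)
Your proposal is correct and follows essentially the same route as the paper: the paper's proof consists solely of invoking the curvature formula $\widetilde{R}(\widetilde{X},\widetilde{Y})\widetilde{Z}=\widetilde{\nabla}_{\widetilde{X}}\widetilde{\nabla}_{\widetilde{Y}}\widetilde{Z}-\widetilde{\nabla}_{\widetilde{Y}}\widetilde{\nabla}_{\widetilde{X}}\widetilde{Z}-\widetilde{\nabla}_{[\widetilde{X},\widetilde{Y}]}\widetilde{Z}$ together with Lemma~\ref{Lemma1} and Theorem~\ref{propo1}, which is exactly your plan. Your write-up is in fact more detailed than the paper's, spelling out the frame derivatives of $p_s$, $\alpha$, and $p^j$, noting how the non-tensorial $\Gamma$-terms reassemble into $R$ and $\nabla R$, and suggesting the pair symmetry of the $(0,4)$ tensor and the $f=1$ specialization as consistency checks---none of which the paper makes explicit.
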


\section{\protect\bigskip para-K\"{a}hler (or paraholomorphic) Norden
structures on $T^{\ast }M$}

An almost paracomplex manifold is an almost product manifold $%
(M_{2k},\varphi )$, $\varphi ^{2}=id$, $\varphi \neq \pm id$ such that the
two eigenbundles $T^{+}M_{2k}$ and $T^{-}M_{2k}$ associated to the two
eigenvalues +1 and -1 of $\varphi $, respectively, have the same rank. Note
that the dimension of an almost paracomplex manifold is necessarily even.
This structure is said to be integrable if the matrix $\varphi =(\varphi
_{j}^{i})$ is reduced to the constant form in a certain holonomic natural
frame in a neighborhood $U_{x}$ of every point $x\in M_{2k}$. On the other
hand, an almost paracomplex structure is integrable if and only if one can
introduce a torsion-free linear connection such that $\nabla \varphi =0$. A
paracomplex manifold is an almost paracomplex manifold $(M_{2k},\varphi )$
such that the G-structure defined by the affinor field $\varphi $ is
integrable. Also it can be give another-equivalent-definition of paracomplex
manifold in terms of local homeomorphisms in the space $R^{k}(j)=\left\{
(X^{1},...,X^{k})/X^{i}\in R(j),\mathrm{\;}i=1,...,k\right\} $ and
paraholomorphic changes of charts in a way similar to \cite{CruForGad:1995}
(see also \cite{Vish:DiffGeo}), i.e. a manifold $M_{2k}$ with an integrable
paracomplex structure $\varphi $ is a realization of the paraholomorphic
manifold $M_{k}(R(j))$ over the algebra $R(j)$.

A tensor field $\omega $ of type $(0,q)$ is called pure tensor field with
respect to $\varphi $ if 
\begin{equation*}
\omega (\varphi X_{1},X_{2},...,X_{q})=\omega (X_{1},\varphi
X_{2},...,X_{q})=...=\omega (X_{1},X_{2},...,\varphi X_{q})
\end{equation*}%
for any $X_{1},...,X_{q}\in \Im _{0}^{1}(M_{2k}).$ The real model of a
paracomplex tensor field $\mathop{\omega }\limits^{\ast }$ on $M_{k}(R(j))$
is a $(0,q)-$tensor field on $M_{2k}$ which being pure with respect to $%
\varphi $. Consider an operator $\Phi _{\varphi }:\Im
_{q}^{0}(M_{2k})\rightarrow \Im _{q+1}^{0}(M_{2k})$ applied to the pure
tensor field $\omega $ by (see \cite{YanoAko:1968})%
\begin{equation*}
(\Phi _{\varphi }\omega )(X,Y_{1},Y_{2},...,Y_{q})=(\varphi X)(\omega
(Y_{1},Y_{2},...,Y_{q}))-X(\omega (\varphi Y_{1},Y_{2},...,Y_{q}))
\end{equation*}%
\begin{equation*}
+\omega ((L_{Y_{1}}\varphi )X,Y_{2},...,Y_{q})+...+\omega
(Y_{1},Y_{2},...,(L_{Y_{q}}\varphi )X),
\end{equation*}%
where $L_{Y}$ denotes the Lie differentiation with respect to $Y$. \noindent
Let $\varphi $ be a (an almost) paracomplex structure on $M_{2k}$ and $\Phi
_{\varphi }\omega =0$, the (almost) paracomplex tensor field $%
\mathop{\omega
}\limits^{\ast }$ on $M_{k}(R(j))$ is said to be (almost) paraholomorphic
(see \cite{Kruchkovich:1972}, \cite{Tachibana}, \cite{YanoAko:1968}). Thus a
(an almost) paraholomorphic tensor field $\mathop{\omega }\limits^{\ast }$
on $M_{k}(R(j))$ is realized on $M_{2k}$ in the form of a pure tensor field $%
\omega $, such that%
\begin{equation*}
(\Phi _{\varphi }\omega )(X,Y_{1},Y_{2},...,Y_{q})=0
\end{equation*}%
for any $X,Y_{1},...,Y_{q}\in \Im _{0}^{1}(M_{2k})$. Therefore, the tensor
field $\omega $ on $M_{2k}$ is also called a (an almost) paraholomorphic
tensor field.

An almost paracomplex Norden manifold $(M_{2k},\varphi ,g)$ is defined to be
a real differentiable manifold $M_{2k}$ endowed with an almost paracomplex
structure $\varphi $ and a Riemannian metric $g$ satisfying Nordenian
property (or purity condition)%
\begin{equation*}
g(\varphi X,Y)=g(X,\varphi Y)
\end{equation*}%
for any $X,Y\in \Im _{0}^{1}(M_{2k})$. Manifolds of this kind are referred
to as anti-Hermitian and B-manifolds (see \cite%
{Druta1,Oproiu1,Oproiu2,Oproiu3,Oproiu4,Papag1,Papag2,
SalimovIscanEtayo:2007, Vishnevskii:2002, Vish:DiffGeo}). If $\varphi $ is
integrable, we say that $(M_{2k},\varphi ,g)$ is a paracomplex Norden
manifold. A paracomplex Norden manifold $(M_{2k},\varphi ,g)$ is a
realization of the paraholomorphic manifold $(M_{k}(R(j)),\mathop{g}%
\limits^{\ast })$, where $\mathop{g}\limits^{\ast }=(\mathop{g}%
\limits_{uv}^{\ast }),\quad u,v=1,...,k$ is a paracomplex metric tensor
field on $M_{k}(R(j))$.

In a paracomplex Norden manifold, a paracomplex Norden metric $g$ is called
paraholomorphic\textit{\ }if%
\begin{equation}
(\Phi _{\varphi }g)(X,Y,Z)=0  \label{A3.1}
\end{equation}%
for any $X,Y,Z\in \Im _{0}^{1}(M_{2k})$. \noindent The paracomplex Norden
manifold with paraholomorphic Norden metric $(M_{2k},\varphi ,g)$ is called
a paraholomorphic Norden manifold.

In \cite{SalimovIscanEtayo:2007}, Salimov and his collaborators have been
proven that for an almost paracomplex manifold with Norden metric $g$, the
condition $\Phi _{\varphi }g=0$ is equivalent to $\nabla \varphi =0$, where $%
\nabla $ is the Levi-Civita connection of $g$. By virtue of this point of
view, paraholomorphic Norden manifolds are similar to K\"{a}hler manifolds.

V. Cruceanu defined in \cite{Cru} an almost paracomplex structure on $%
T^{\ast }M$ as follows:%
\begin{eqnarray}
J(^{H}X) &=&-^{H}X  \label{A3.5} \\
J({}^{V}\omega ) &=&^{V}\omega  \notag
\end{eqnarray}%
for any $X\in \Im _{0}^{1}\left( M\right) $ and $\omega \in \Im
_{1}^{0}\left( M\right) $. One can easily check that the metric $^{CG}g_{f}$
is pure with respect to the almost paracomplex structure $J$. Hence we state
the following theorem.

\begin{theorem}
\label{Theo1}Let $(M,g)$ be a Riemannian manifold and $T^{\ast }M$ be its
cotangent bundle equipped with the rescaled Cheeger Gromoll type metric $%
^{CG}g_{f}$ and the paracomplex structure $J$. The triple $(T^{\ast
}M,J,^{CG}g_{f})$ is an almost paracomplex Norden manifold.
\end{theorem}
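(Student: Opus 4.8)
The plan is to verify the two defining requirements in turn: first that $J$ is a genuine almost paracomplex structure on $T^{\ast}M$, and then that ${}^{CG}g_{f}$ is pure (Nordenian) with respect to $J$.

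For the first part, I would work in the adapted frame $\{E_{\alpha}\} = \{E_{j}, E_{\overline{j}}\}$ of (\ref{A2.4}). Since $E_{j} = {}^{H}X_{(j)}$ and $E_{\overline{j}} = {}^{V}\theta^{(j)}$, the defining relations (\ref{A3.5}) give $J E_{j} = -E_{j}$ and $J E_{\overline{j}} = E_{\overline{j}}$, so $J$ is diagonal in this frame. From this, $J^{2} = \mathrm{id}$ and $J \neq \pm\mathrm{id}$ are immediate. The $(+1)$-eigenbundle is the vertical distribution $\mathrm{span}\{E_{\overline{j}}\}$ and the $(-1)$-eigenbundle is the horizontal distribution $\mathrm{span}\{E_{j}\}$; each has rank $n$, so the two eigenbundles have equal rank and $\dim T^{\ast}M = 2n$ is even, exactly as the definition of an almost paracomplex manifold requires.

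For the second part, since both ${}^{CG}g_{f}$ and $J$ are tensorial, it suffices to check the purity identity ${}^{CG}g_{f}(J\widetilde{X}, \widetilde{Y}) = {}^{CG}g_{f}(\widetilde{X}, J\widetilde{Y})$ on the frame vectors $E_{\alpha}$, equivalently on the three types of pairs of lifts. Using the block form (\ref{A33.1}), I would run through the cases: for a pair $({}^{H}X, {}^{H}Y)$ both sides equal $-fg(X,Y)$ by (\ref{A1.3}); for the mixed pair $({}^{H}X, {}^{V}\omega)$ both sides vanish, since the off-diagonal block of ${}^{CG}g_{f}$ is zero by (\ref{A1.2}) and $J$ only changes a sign; and for $({}^{V}\omega, {}^{V}\theta)$ both sides equal ${}^{CG}g_{f}({}^{V}\omega, {}^{V}\theta)$ because $J$ fixes vertical lifts. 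Hence the purity condition holds on all pairs, and the triple $(T^{\ast}M, J, {}^{CG}g_{f})$ is an almost paracomplex Norden manifold.

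There is no real obstacle here: the computation is routine precisely because $J$ respects the horizontal/vertical splitting while ${}^{CG}g_{f}$ is block-diagonal with respect to that same splitting, so the pure-metric condition collapses to sign bookkeeping on each block. The only step deserving a word of justification is that testing purity on the spanning set $\{E_{\alpha}\}$ (equivalently on lifts) is enough, which follows from the $C^{\infty}(T^{\ast}M)$-bilinearity of the metric; this is precisely the content of the paper's preceding remark that one ``can easily check'' purity.
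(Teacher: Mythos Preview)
Your proposal is correct and follows exactly the approach the paper indicates: the paper simply asserts that ``one can easily check that the metric $^{CG}g_{f}$ is pure with respect to the almost paracomplex structure $J$'' and states the theorem, so your case-by-case verification on the horizontal/vertical splitting is precisely the routine check the authors leave to the reader.
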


We now give conditions for the rescaled Cheeger Gromoll type metric $%
^{CG}g_{f}$ to be paraholomorphic with respect to the almost paracomplex
structure $J$. Using defination of the rescaled Cheeger Gromoll type metric $%
^{CG}g_{f}$ and the almost paracomplex structure $J$ and by using the fact
that ${}^{V}\omega {}^{V}(g^{-1}(\theta ,\sigma ))=0$ and $%
{}^{H}X{}^{V}(fg(Y,Z))={}^{V}(X(fg(Y,Z)))$ we calculate 
\begin{eqnarray*}
(\Phi _{J}{}^{CG}g_{f})(\tilde{X},\tilde{Y},\tilde{Z}) &=&(J\tilde{X}%
)({}^{CG}g_{f}(\tilde{Y},\tilde{Z}))-\tilde{X}(^{CG}g_{f}(J\tilde{Y},\tilde{Z%
})) \\
&+&{}^{CG}g_{f}((L_{\tilde{Y}}J)\tilde{X},\tilde{Z})+{}^{CG}g_{f}(\tilde{Y}%
,(L_{\tilde{Z}}J)\tilde{X})
\end{eqnarray*}%
for all $\tilde{X},\tilde{Y},\tilde{Z}\in \Im _{0}^{1}(T^{\ast }M)$. For
pairs $\tilde{X}=^{H}X,^{V}\omega $, $\widetilde{Y}=^{H}Y,^{V}\theta $ and $%
\widetilde{Z}=^{H}Z,{}^{V}\sigma $, then we get%
\begin{eqnarray}
(\Phi _{J}{}^{CG}g_{f})({}^{H}X,{}^{V}\theta ,{}^{H}Z)
&=&2^{CG}g_{f}({}^{V}\theta ,^{V}(p\circ R(X,Z))  \label{A3.6} \\
(\Phi _{J}{}^{CG}g_{f})({}^{H}X,{}^{H}Y,{}^{V}\sigma )
&=&2^{CG}g_{f}(^{V}(p\circ R(X,Y),^{V}\sigma ).  \notag
\end{eqnarray}%
and the others is zero. Therefore, we have the following result.

\begin{theorem}
\label{Theo2}Let $(M,g)$ be a Riemannian manifold and let $T^{\ast }M$ be
its cotangent bundle equipped with the rescaled Cheeger Gromoll type metric $%
^{CG}g_{f}$ and the paracomplex structure $J$. The triple $\left( T^{\ast
}M,J,{}^{CG}g_{f}\right) $ is a para-K\"{a}hler-Norden (paraholomorphic
Norden) manifold if and only if $M$ is flat.
\end{theorem}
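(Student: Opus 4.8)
The plan is to combine the two formulas in (\ref{A3.6}) with the characterization recalled above—due to Salimov and his collaborators—that on an almost paracomplex Norden manifold the paraholomorphicity condition $\Phi_J{}^{CG}g_f=0$ is equivalent to $\widetilde{\nabla}J=0$, and hence to the triple being para-K\"{a}hler. Since Theorem \ref{Theo1} already guarantees that $(T^{\ast}M,J,{}^{CG}g_f)$ is an almost paracomplex Norden manifold, the whole problem reduces to deciding when $\Phi_J{}^{CG}g_f$ vanishes identically.

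First I would note that, among all evaluations of $\Phi_J{}^{CG}g_f$ on pairs of horizontal and vertical lifts, every component other than the two displayed in (\ref{A3.6}) has already been shown to be zero. Consequently $\Phi_J{}^{CG}g_f=0$ if and only if
\begin{equation*}
{}^{CG}g_f\!\left({}^V\theta,{}^V(p\circ R(X,Z))\right)=0 \quad\text{and}\quad {}^{CG}g_f\!\left({}^V(p\circ R(X,Y)),{}^V\sigma\right)=0
\end{equation*}
for all $X,Y,Z\in\Im_0^1(M)$ and all $\theta,\sigma\in\Im_1^0(M)$.

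The decisive step is to use the non-degeneracy of the vertical block of the metric. By (\ref{A1.1}) the restriction of ${}^{CG}g_f$ to the vertical distribution equals $\frac{1}{\alpha}\big(g^{-1}(\omega,\theta)+g^{-1}(\omega,p)g^{-1}(\theta,p)\big)$, which is positive definite and therefore non-degenerate. Hence each of the two conditions above—required to hold for all vertical test covectors—is equivalent to $^V(p\circ R(X,Y))=0$, i.e.\ to the vanishing of the $1$-form $p\circ R(X,Y)$, or explicitly $p\big(R(X,Y)Z\big)=0$ for every $Z$. The final point is that this must hold at every point of $T^{\ast}M$, that is, for every covector $p\in T_x^{\ast}M$; fixing $x$ and letting $p$ sweep out the entire fibre $T_x^{\ast}M$ forces $R(X,Y)Z=0$, and since $X,Y,Z$ are arbitrary we conclude $R\equiv 0$, i.e.\ $M$ is flat. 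Conversely, if $M$ is flat then $R=0$ annihilates both right-hand sides in (\ref{A3.6}), so $\Phi_J{}^{CG}g_f=0$ and, by the quoted equivalence $\Phi_J{}^{CG}g_f=0\Leftrightarrow\widetilde{\nabla}J=0$ (the latter entailing integrability of $J$ since $\widetilde{\nabla}$ is torsion-free), the triple is para-K\"{a}hler-Norden (paraholomorphic Norden).

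The main obstacle is the passage from $p\circ R(X,Y)=0$ to $R=0$: one must remember that this is an identity on the total space $T^{\ast}M$ and that the covector $p$ is not fixed but ranges over the whole cotangent fibre, which is exactly what upgrades the annihilation of $R$ by a single covector to the vanishing of $R$ itself. Everything else reduces to reading off (\ref{A3.6}) and to the elementary non-degeneracy of the vertical part of ${}^{CG}g_f$.
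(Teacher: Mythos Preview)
Your proposal is correct and follows essentially the same route as the paper: the paper computes $\Phi_J{}^{CG}g_f$ on all horizontal/vertical combinations, obtains exactly the two nonzero components (\ref{A3.6}), and immediately concludes the theorem. You have simply made explicit the two points the paper leaves to the reader---the non-degeneracy of the vertical block of ${}^{CG}g_f$ and the fact that $p$ ranges over the whole fibre so that $p\circ R(X,Y)=0$ forces $R=0$---together with the converse and the passage from $\Phi_J{}^{CG}g_f=0$ to the para-K\"{a}hler condition via $\widetilde{\nabla}J=0$.
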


\begin{remark}
Let $(M,g)$ be a Riemannian manifold and let $T^{\ast }M$ be its cotangent
bundle equipped with the rescaled Cheeger Gromoll type metric $^{CG}g_{f}.$
The diagonal lift $^{D}\gamma $ of $\gamma \in \Im _{1}^{1}(M)$ to $T^{\ast
}M$ is defined by the formulas%
\begin{eqnarray*}
^{D}\gamma ^{H}X &=&^{H}(\gamma X) \\
^{D}\gamma ^{V}\omega &=&-^{V}(\omega \circ \gamma )
\end{eqnarray*}%
for any $X\in \Im _{0}^{1}\left( M\right) $ and $\omega \in \Im
_{1}^{0}\left( M\right) .$ The diagonal lift $^{D}I$ of the identity tensor
field $I\in \Im _{1}^{1}(M)$ has the following properties%
\begin{eqnarray*}
^{D}I^{H}X &=&^{H}X \\
^{D}I^{V}\omega &=&-^{V}\omega
\end{eqnarray*}%
and satisfies $(^{D}I)^{2}=I_{T^{\ast }M}$. Thus, $^{D}I$ is an almost
paracomplex structure. Also, the rescaled Cheeger Gromoll type metric $%
^{CG}g_{f}$ is pure with respect to $^{D}I$, e.i. the triple $\left( T^{\ast
}M,^{D}I,{}^{CG}g_{f}\right) $ is an almost paracomplex Norden manifold.
Finally, by using $\Phi -$operator, we can say that the rescaled Cheeger
Gromoll type metric $^{CG}g_{f}$ is paraholomorphic with respect to $^{D}I$
if and only if $M$ is flat.
\end{remark}

The following remark follows directly from Proposition \ref{propo3}.

\begin{remark}
The cotangent bundle $\left( T^{\ast }M,{}^{CG}g_{f}\right) $ is never flat.
\end{remark}

As is known that the almost paracomplex Norden structure is a specialized
Riemannian almost product structure on a Riemannian manifold. The theory of
Riemannian almost product structures was initiated by K. Yano in \cite{Yano}%
. The classification of Riemannian almost-product structure with respect to
their covariant derivatives is described by A.M. Naveira in \cite{Naveira}.
This is the analogue of the classification of almost Hermitian structures by
A. Gray and L. Hervella in \cite{Gray}. Having in mind these results, M.
Staikova and K. Gribachev obtained a classification of the Riemannian almost
product structures, for which the trace vanishes (see \cite{Staikova}).
There are lots of physical applications involving a Riemannian almost
product manifold. Now we shall give some applications for almost paracomplex
Norden structures in context of almost product Riemannian manifolds.

\textit{4.1. }Let us recall almost product Riemannian manifolds. If an $n$%
-dimensional Riemannian manifold $M$, endowed with a positive definite
Riemannian metric $g$, admits a non-trivial tensor field $F$ of type $(1.1)$
such that 
\begin{equation*}
F^{2}=I
\end{equation*}%
and 
\begin{equation*}
g(FX,Y)=g(X,FY)
\end{equation*}%
for every vector fields $X,Y\in \Im _{0}^{1}(M)$, then $F$ is called an
almost product structure and $(M,F,g)$ is called an almost product
Riemannian manifold. An integrable almost product Riemannian manifold with
structure tensor $F$ is called a locally product Riemannian manifold. If $F$
is covariantly constant with respect to the Levi-Civita connection $\nabla $
of $g$ which is equivalent to $\Phi _{F}g=0$, then ($M,F,g)$ is called a
locally decomposable Riemannian manifold.

Now consider the almost product structure ${J}$ defined by (\ref{A3.5}) and
the Levi-Civita connection $\widetilde{\nabla }$ given by Proposition \ref%
{propo1}. We define a tensor field of type $(1,2)$ on $T^{\ast }M$ by%
\begin{equation*}
\widetilde{S}(\widetilde{X},\widetilde{Y})=\frac{1}{2}\{(\widetilde{\nabla }%
_{J\widetilde{Y}}J)\widetilde{X}+J((\widetilde{\nabla }_{\widetilde{Y}}J)%
\widetilde{X})-J((\widetilde{\nabla }_{\widetilde{X}}J)\widetilde{Y})\}
\end{equation*}%
for all $\widetilde{X},\widetilde{Y}\in \Im _{0}^{1}(T^{\ast }M)$. Then the
linear connection 
\begin{equation}
\overline{\nabla }_{\widetilde{X}}\widetilde{Y}=\widetilde{\nabla }_{%
\widetilde{X}}\widetilde{Y}-\widetilde{S}(\widetilde{X},\widetilde{Y})
\label{A3.8}
\end{equation}%
is an almost product connection on $T^{\ast }M$ (for almost product
connection, see \cite{Leon}).

\begin{theorem}
Let $(M,g)$ be a Riemannian manifold and let $T^{\ast }M$ be its cotangent
bundle equipped with the rescaled Cheeger Gromoll type metric $^{CG}g_{f}$
and the almost product structure $J$. Then the almost product connection $%
\overline{\nabla }$ constructed by the Levi-Civita connection $\widetilde{%
\nabla }$ of the rescaled Cheeger Gromoll type metric $^{CG}g_{f}$ and the
almost product structure $J$ is as follows:%
\begin{equation}
\left\{ 
\begin{array}{l}
i)\text{ }\overline{\nabla }_{^{H}X}^{\text{ \ \ \ \ }H}Y=\text{ }%
^{H}(\nabla _{X}Y)+^{H}(^{f}A(X,Y)) \\ 
ii)\text{ }\overline{\nabla }_{^{H}X}^{\text{ \ \ \ \ }V}\theta =\text{ }%
^{V}(\nabla _{X}\theta ), \\ 
iii)\text{ }\overline{\nabla }_{^{V}\omega }^{\text{ \ \ \ \ }H}Y=\dfrac{3}{%
2f\alpha }\text{ }{^{H}\left( p\left( g^{-1}\circ R\left( \mathrm{\;,}%
Y\right) \tilde{\omega}\right) \right) }, \\ 
iv)\text{ }\overline{\nabla }_{^{V}\omega }^{\text{ \ \ \ \ }V}\theta ={-%
\frac{1}{\alpha }\left( {}^{CG}g\left( {}^{V}\omega ,\gamma \delta \right)
{}^{V}\theta +{}^{CG}g_{f}\left( {}^{V}\theta ,\gamma \delta \right)
{}^{V}\omega \right) +\frac{\alpha +1}{\alpha }{}^{CG}g}_{f}{\left(
{}^{V}\omega ,{}^{V}\theta \right) \gamma \delta } \\ 
\text{ \ \ \ \ \ \ \ \ \ \ \ \ \ \ \ \ \ \ \ \ }{-\frac{1}{\alpha }{}^{CG}g}%
_{f}{\left( {}^{V}\omega ,\gamma \delta \right) {}^{CG}g}_{f}{\left(
{}^{V}\theta ,\gamma \delta \right) \gamma \delta .}%
\end{array}%
\right.  \label{A3.9}
\end{equation}
\end{theorem}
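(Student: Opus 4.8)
The plan is to compute $\overline{\nabla}_{\widetilde{X}}\widetilde{Y}=\widetilde{\nabla}_{\widetilde{X}}\widetilde{Y}-\widetilde{S}(\widetilde{X},\widetilde{Y})$ directly on each of the four pairs of lifts $(\widetilde{X},\widetilde{Y})\in\{{}^{H}X,{}^{V}\omega\}\times\{{}^{H}Y,{}^{V}\theta\}$, since both $\widetilde{\nabla}$ (through Proposition \ref{propo2}) and $J$ (through \eqref{A3.5}) are explicitly known on such lifts and all three operations are tensorial. The preliminary and decisive step is to evaluate the deformation tensor $(\widetilde{\nabla}_{\widetilde{X}}J)\widetilde{Y}=\widetilde{\nabla}_{\widetilde{X}}(J\widetilde{Y})-J(\widetilde{\nabla}_{\widetilde{X}}\widetilde{Y})$ on these four pairs. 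Here the key structural fact is that $J$ acts as $-\mathrm{Id}$ on the horizontal subbundle and as $+\mathrm{Id}$ on the vertical subbundle; hence whenever the entry $\widetilde{\nabla}_{\widetilde{X}}\widetilde{Y}$ of Proposition \ref{propo2} splits into a horizontal plus a vertical term, the two contributions to $(\widetilde{\nabla}J)$ either cancel or reinforce, which makes the whole computation short.

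Carrying this out I expect to find $(\widetilde{\nabla}_{{}^{H}X}J){}^{H}Y=-{}^{V}(p\circ R(X,Y))$ and $(\widetilde{\nabla}_{{}^{H}X}J){}^{V}\theta=\frac{1}{f\alpha}{}^{H}(p(g^{-1}\circ R(\cdot,X)\widetilde{\theta}))$, while $(\widetilde{\nabla}_{{}^{V}\omega}J){}^{H}Y=0$ and $(\widetilde{\nabla}_{{}^{V}\omega}J){}^{V}\theta=0$ (the last two because $\widetilde{\nabla}_{{}^{V}\omega}{}^{H}Y$ is purely horizontal and $\widetilde{\nabla}_{{}^{V}\omega}{}^{V}\theta$ is purely vertical, so $J$ commutes through the covariant derivative up to the overall sign). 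Substituting these values into $\widetilde{S}(\widetilde{X},\widetilde{Y})=\frac{1}{2}\{(\widetilde{\nabla}_{J\widetilde{Y}}J)\widetilde{X}+J((\widetilde{\nabla}_{\widetilde{Y}}J)\widetilde{X})-J((\widetilde{\nabla}_{\widetilde{X}}J)\widetilde{Y})\}$, and again using $J|_{H}=-\mathrm{Id}$, $J|_{V}=+\mathrm{Id}$, I would obtain $\widetilde{S}({}^{H}X,{}^{H}Y)=\frac{1}{2}{}^{V}(p\circ R(X,Y))$, $\widetilde{S}({}^{H}X,{}^{V}\theta)=\frac{1}{2f\alpha}{}^{H}(p(g^{-1}\circ R(\cdot,X)\widetilde{\theta}))$, $\widetilde{S}({}^{V}\omega,{}^{H}Y)=-\frac{1}{f\alpha}{}^{H}(p(g^{-1}\circ R(\cdot,Y)\widetilde{\omega}))$ and $\widetilde{S}({}^{V}\omega,{}^{V}\theta)=0$.

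The four assertions then follow by subtracting these from the corresponding entries of Proposition \ref{propo2}: in case i) the vertical curvature terms $\frac{1}{2}{}^{V}(p\circ R(X,Y))$ cancel, leaving ${}^{H}(\nabla_{X}Y)+{}^{H}({}^{f}A(X,Y))$; in case ii) the horizontal curvature terms cancel, leaving ${}^{V}(\nabla_{X}\theta)$; in case iv) nothing is subtracted, so $\overline{\nabla}_{{}^{V}\omega}{}^{V}\theta=\widetilde{\nabla}_{{}^{V}\omega}{}^{V}\theta$ reproduces Proposition \ref{propo2} iv). The only case whose coefficient changes is iii), where $\widetilde{\nabla}_{{}^{V}\omega}{}^{H}Y=\frac{1}{2f\alpha}{}^{H}(\cdots)$ and the subtraction of $-\frac{1}{f\alpha}{}^{H}(\cdots)$ combine to give $\frac{3}{2f\alpha}{}^{H}(p(g^{-1}\circ R(\cdot,Y)\widetilde{\omega}))$. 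The main obstacle is purely bookkeeping: correctly separating the horizontal and vertical constituents of each $\widetilde{\nabla}$-term and tracking the three $J$-applications inside $\widetilde{S}$ — in particular the mixed term $(\widetilde{\nabla}_{J\widetilde{Y}}J)\widetilde{X}$, whose sign flip of $J\widetilde{Y}$ must be propagated — rather than any conceptual difficulty.
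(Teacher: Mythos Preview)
Your proposal is correct and follows exactly the route the paper intends: the paper states this theorem without proof, implicitly leaving the direct computation of $\widetilde{S}(\widetilde{X},\widetilde{Y})$ from the definition and Proposition~\ref{propo2} to the reader. Your intermediate values for $(\widetilde{\nabla}J)$ and for $\widetilde{S}$ on the four pairs of lifts are all correct, and the final subtraction reproduces the claimed formulas, including the coefficient $\tfrac{3}{2f\alpha}$ in case iii).
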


Denoting by $\overline{T}$, the torsion tensor of $\overline{\nabla }$, we
have from (\ref{A3.5}), (\ref{A3.8}) and (\ref{A3.9})%
\begin{eqnarray*}
\overline{T}(^{V}\omega ,^{V}\theta ) &=&0, \\
\overline{T}(^{V}\omega ,^{H}Y) &=&\dfrac{3}{2f\alpha }\text{ }{^{H}\left(
p\left( g^{-1}\circ R\left( \mathrm{\;,}Y\right) \tilde{\omega}\right)
\right) }, \\
\overline{T}(^{H}X,^{H}Y) &=&-^{V}(p\circ R(X,Y)).
\end{eqnarray*}%
Hence we have the theorem below

\begin{theorem}
Let $(M,g)$ be a Riemannian manifold and let $T^{\ast }M$ be its cotangent
bundle. The almost product connection $\overline{\nabla }$ constructed by
the Levi-Civita connection $\widetilde{\nabla }$ of the rescaled Cheeger
Gromoll type metric $^{CG}g_{f}$ and the almost product structure $J$ is
symmetric if and only if $M$ is flat.
\end{theorem}

As is well-known, if there exists a symmetric almost product connection on $%
M $ then the almost product structure $J$ is integrable \cite{Leon}. The
converse is also true \cite{Fujimoto}. Thus we get the following conclusion.

\begin{corollary}
Let $(M,g)$ be a Riemannian manifold and $T^{\ast }M$ be its tangent bundle
equipped with the rescaled Cheeger Gromoll type metric $^{CG}g_{f}$ and the
almost product structure $J$. The triple $(T^{\ast }M,J,^{CG}g_{f})$ is a
locally product Riemannian manifold if and only if $M$ is flat.
\end{corollary}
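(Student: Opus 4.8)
The plan is to reduce the statement to the integrability of $J$ and then to characterize that integrability by the flatness of $M$. By Theorem \ref{Theo1} the triple $(T^{\ast}M,J,{}^{CG}g_{f})$ is an almost paracomplex, hence almost product, Riemannian manifold (the purity condition $g(JX,Y)=g(X,JY)$ is exactly the almost product Riemannian condition, and $^{CG}g_{f}$ is positive definite since $f>0$ and $g$ is Riemannian). Thus, by the definition of a locally product Riemannian manifold recalled above, the triple is locally product exactly when the almost product structure $J$ is integrable, and the whole assertion collapses to the single claim that $J$ is integrable if and only if $M$ is flat.

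To prove this claim I would compute the Nijenhuis tensor
\[
N_{J}(\widetilde{X},\widetilde{Y})=[J\widetilde{X},J\widetilde{Y}]+[\widetilde{X},\widetilde{Y}]-J[J\widetilde{X},\widetilde{Y}]-J[\widetilde{X},J\widetilde{Y}]
\]
on pairs of horizontal and vertical lifts, using $J\,{}^{H}X=-{}^{H}X$ and $J\,{}^{V}\omega={}^{V}\omega$ from (\ref{A3.5}) together with the bracket formulas (\ref{A2.3}). On a vertical-vertical pair everything vanishes, since vertical fields commute and $J$ fixes them; on a horizontal-vertical pair $({}^{H}X,{}^{V}\theta)$ the four terms cancel in pairs, so $N_{J}({}^{H}X,{}^{V}\theta)=0$. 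The only surviving component is the horizontal-horizontal one, where the vertical part ${}^{V}(p\circ R(X,Y))$ of the bracket $[{}^{H}X,{}^{H}Y]$ adds up rather than cancelling, yielding
\[
N_{J}({}^{H}X,{}^{H}Y)=4\,{}^{V}\!\left(p\circ R(X,Y)\right).
\]
Since an almost product structure is integrable precisely when its Nijenhuis tensor vanishes, $J$ is integrable if and only if $p\circ R(X,Y)=0$ for all $X,Y$ and all $p$, that is, if and only if $R=0$; this is exactly the flatness of $M$, and the corollary follows.

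The same conclusion can be reached, and cross-checked, through the connection machinery already developed. If $M$ is flat, the theorem immediately preceding this corollary shows that the almost product connection $\overline{\nabla}$ is symmetric; being an almost product connection it then forces $J$ to be integrable by Leon's criterion \cite{Leon}, so the triple is locally product. Conversely, if the triple is locally product then $J$ is integrable, and by the converse result of Fujimoto \cite{Fujimoto} a symmetric almost product connection exists; comparing with the torsion of $\overline{\nabla}$ computed just above, whose horizontal-horizontal component is $-{}^{V}(p\circ R(X,Y))$ and which agrees with $N_{J}$ up to the factor $-4$ on that component, again forces $R=0$.

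The routine but error-prone part will be the bookkeeping in the Nijenhuis (equivalently, torsion) computation: one must keep careful track of the signs produced by $J$ on the horizontal and vertical summands and verify that the horizontal-vertical and vertical-vertical components genuinely cancel, so that only the curvature term in the horizontal-horizontal slot survives. Once that term is isolated, flatness is immediate; the only nonroutine input is the standard equivalence between integrability of an almost product structure, the vanishing of its Nijenhuis tensor, and the existence of a symmetric almost product connection, supplied here by \cite{Leon} and \cite{Fujimoto}.
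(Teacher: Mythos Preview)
Your proof is correct, and your primary argument via the Nijenhuis tensor is a genuinely different route from the paper's. The paper does not compute $N_{J}$ at all; instead it relies on the preceding theorem that the specific almost product connection $\overline{\nabla}$ is torsion-free if and only if $M$ is flat, and then invokes the general equivalence (from \cite{Leon} and \cite{Fujimoto}) between integrability of $J$ and the existence of \emph{some} symmetric almost product connection. As you implicitly notice in your cross-check, that deduction is clean in the forward direction ($M$ flat $\Rightarrow$ $\overline{\nabla}$ symmetric $\Rightarrow$ $J$ integrable) but is loose in the converse: integrability only yields \emph{a} symmetric almost product connection, not that $\overline{\nabla}$ itself is symmetric, so flatness does not follow without an extra observation. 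Your direct computation of $N_{J}({}^{H}X,{}^{H}Y)=4\,{}^{V}(p\circ R(X,Y))$ bypasses this entirely and gives both directions at once; it is more elementary, uses only the bracket formulas (\ref{A2.3}) and the definition (\ref{A3.5}), and does not depend on the metric $^{CG}g_{f}$ or on $f$. Your remark that the horizontal--horizontal torsion of $\overline{\nabla}$ equals $-\tfrac{1}{4}$ times the corresponding component of $N_{J}$ is exactly the missing link that would repair the paper's converse argument.
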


Similarly, let us consider the almost product structure $^{D}I$ and the
Levi-Civita connection $\widetilde{\nabla }$ of the rescaled Cheeger Gromoll
type metric $^{CG}g_{f}$, Another almost product connection can be
constructed.

If $J$ is covariantly constant with respect to the Levi-Civita connection $%
\widetilde{\nabla }$ of the rescaled Cheeger Gromoll type metric $^{CG}g_{f}$
which is equivalent to $\Phi _{J}^{\text{ }CG}g_{f}=0$, then $(T^{\ast
}M,J,^{CG}g_{f})$ is called a locally decomposable Riemannian manifold. In
view of Theorem \ref{Theo2}, we have the following.

\begin{corollary}
Let $(M,g)$ be a Riemannian manifold and $T^{\ast }M$ be its cotangent
bundle equipped with the rescaled Cheeger Gromoll type metric $^{CG}g_{f}$
and the almost product structure $J$. The triple $(T^{\ast }M,J,^{CG}g_{f})$
is a locally decomposable Riemannian manifold if and only if $M$ is flat.
\end{corollary}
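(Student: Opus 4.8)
The plan is to deduce the statement from Theorem \ref{Theo2} via the defining condition of local decomposability. By the definition recalled immediately before the corollary, $(T^{\ast}M,J,{}^{CG}g_f)$ is locally decomposable exactly when $J$ is parallel for the Levi-Civita connection $\widetilde{\nabla}$ of ${}^{CG}g_f$, that is $\widetilde{\nabla}J=0$. Since $(T^{\ast}M,J,{}^{CG}g_f)$ is an almost paracomplex Norden manifold by Theorem \ref{Theo1}, the result of Salimov and collaborators quoted in Section~4 yields the equivalence $\widetilde{\nabla}J=0\Longleftrightarrow\Phi_J{}^{CG}g_f=0$. But $\Phi_J{}^{CG}g_f=0$ is precisely the paraholomorphic (para-K\"ahler--Norden) condition, which Theorem \ref{Theo2} already characterizes as flatness of $M$. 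Chaining these equivalences gives the corollary at once, so on this route there is essentially no obstacle: the geometric content has already been spent in proving Theorem \ref{Theo2}.

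To make the flatness mechanism explicit, I would alternatively verify $\widetilde{\nabla}J=0$ by direct computation. Writing $(\widetilde{\nabla}_{\widetilde{X}}J)\widetilde{Y}=\widetilde{\nabla}_{\widetilde{X}}(J\widetilde{Y})-J(\widetilde{\nabla}_{\widetilde{X}}\widetilde{Y})$ and evaluating on the four pairs of lifted fields using the connection formulas of Proposition \ref{propo2} together with $J({}^H X)=-{}^H X$ and $J({}^V\omega)={}^V\omega$, one checks that the $({}^V\omega,{}^H Y)$ and $({}^V\omega,{}^V\theta)$ components vanish identically: the horizontal transport $\widetilde{\nabla}_{{}^V\omega}{}^H Y$ is odd under $J$ and cancels, while $\widetilde{\nabla}_{{}^V\omega}{}^V\theta$ is purely vertical and hence $J$-invariant. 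The only surviving terms come from the $({}^H X,{}^H Y)$ and $({}^H X,{}^V\theta)$ pairs, where the halves appearing in $\widetilde{\nabla}$ double rather than cancel and one obtains $(\widetilde{\nabla}_{{}^H X}J){}^H Y=-{}^V(p\circ R(X,Y))$ and $(\widetilde{\nabla}_{{}^H X}J){}^V\theta=\frac{1}{f\alpha}{}^H(p(g^{-1}\circ R(\cdot,X)\widetilde{\theta}))$.

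Both expressions vanish for all $X,Y,\theta$ and all $p$ if and only if the curvature tensor $R$ of $(M,g)$ is zero, i.e. $M$ is flat; conversely, flatness annihilates all four components, so $\widetilde{\nabla}J=0$. This settles both directions simultaneously. The one point requiring a little care, rather than pure formalism, is that the curvature terms are genuine obstructions that cannot be absorbed by the rescaling: since $f>0$ and $\alpha=1+r^2\ge 1$ never vanish, no choice of $f$ can cancel $R$, so the flatness of the base is indispensable. This is exactly the obstruction already isolated in Theorem \ref{Theo2}.
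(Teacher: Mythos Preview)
Your argument is correct and its first paragraph is precisely the paper's own proof: the corollary is stated immediately after recalling that local decomposability means $\widetilde{\nabla}J=0$, which is equivalent to $\Phi_J{}^{CG}g_f=0$, and the paper simply invokes Theorem~\ref{Theo2}. Your supplementary direct computation of $(\widetilde{\nabla}_{\widetilde X}J)\widetilde Y$ on the four pairs of lifts is also correct and makes the curvature obstruction explicit, but it goes beyond what the paper records.
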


\textit{4.2.} Let $(M_{2k},\varphi ,g)$ be a non-integrable almost
paracomplex manifold with a Norden metric. An almost paracomplex Norden
manifold $(M_{2k},\varphi ,g)$ is a quasi-para-K\"{a}hler--Norden manifold,
if $\underset{X,Y,Z}{\sigma }g((\nabla _{X}\varphi )Y,Z)=0$, where $\sigma $
is the cyclic sum by three arguments \cite{Manev}. In \cite{Salimov4}, the
authors proved that $\underset{X,Y,Z}{\sigma }g((\nabla _{X}\varphi )Y,Z)=0$
is equivalent to $(\Phi _{\varphi }g)(X,Y,Z)+(\Phi _{\varphi
}g)(Y,Z,X)+(\Phi _{\varphi }g)(Z,X,Y)=0.$ We compute%
\begin{equation*}
A(\tilde{X},\tilde{Y},\tilde{Z})=(\Phi _{J}{}^{CG}g_{f})(\tilde{X},\tilde{Y},%
\tilde{Z})+(\Phi _{J}{}^{CG}g_{f})(\widetilde{Y},\widetilde{Z},\widetilde{X}%
)+(\Phi _{J}{}^{CG}g_{f})(\widetilde{Z},\widetilde{X},\widetilde{Y})
\end{equation*}%
for all $\widetilde{X},\widetilde{Y},\widetilde{Z}\in \Im _{0}^{1}(T^{\ast
}M).$ By means of (\ref{A3.6}), we have $A(\tilde{X},\tilde{Y},\tilde{Z})=0$
for all $\widetilde{X},\widetilde{Y},\widetilde{Z}\in \Im _{0}^{1}(T^{\ast
}M).$ Hence we state the following theorem.

\begin{theorem}
\label{Theo5}Let $(M,g)$ be a Riemannian manifold and $T^{\ast }M$ be its
cotangent bundle equipped with the rescaled Cheeger Gromoll type metric $%
^{CG}g_{f}$ and the almost paracomplex structure $J$ defined by (\ref{A3.5}%
). The triple $(T^{\ast }M,J,^{CG}g_{f})$ is a quasi-para-K\"{a}hler--Norden
manifold.
\end{theorem}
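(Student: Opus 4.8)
The plan is to reduce the statement to the vanishing of the cyclic-sum tensor $A$ defined just above the theorem, and then to verify that vanishing on lifted frame fields. By the criterion of \cite{Salimov4} quoted in the preceding discussion, the triple $(T^{\ast}M,J,{}^{CG}g_{f})$ is quasi-para-K\"{a}hler--Norden exactly when
\[
A(\tilde{X},\tilde{Y},\tilde{Z})=(\Phi_{J}{}^{CG}g_{f})(\tilde{X},\tilde{Y},\tilde{Z})+(\Phi_{J}{}^{CG}g_{f})(\tilde{Y},\tilde{Z},\tilde{X})+(\Phi_{J}{}^{CG}g_{f})(\tilde{Z},\tilde{X},\tilde{Y})
\]
vanishes identically. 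Since ${}^{CG}g_{f}$ is pure with respect to $J$ (Theorem \ref{Theo1}), the operator $\Phi_{J}{}^{CG}g_{f}$ is again a tensor field, of type $(0,3)$; hence $A$ is tensorial, and it suffices to check $A=0$ when each of $\tilde{X},\tilde{Y},\tilde{Z}$ is a horizontal lift ${}^{H}X$ or a vertical lift ${}^{V}\omega$.

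First I would record the only input needed, namely the computation (\ref{A3.6}): it shows that $\Phi_{J}{}^{CG}g_{f}$ is supported on exactly the two lift-patterns $({}^{H}X,{}^{V}\theta,{}^{H}Z)$ and $({}^{H}X,{}^{H}Y,{}^{V}\sigma)$, with values $2\,{}^{CG}g_{f}({}^{V}\theta,{}^{V}(p\circ R(X,Z)))$ and $2\,{}^{CG}g_{f}({}^{V}(p\circ R(X,Y)),{}^{V}\sigma)$ respectively, while every other combination of lifts gives zero. In particular $\Phi_{J}{}^{CG}g_{f}$ is nonzero only when precisely one of its three arguments is vertical, and then only when that vertical argument sits in the second or third slot.

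Next I would run through the eight lift-combinations grouped by the number of vertical arguments. If none, two, or three arguments are vertical, then each of the three cyclic terms of $A$ falls outside the support described above, so $A=0$ term by term. The only genuine case is that of exactly one vertical argument, for instance $A({}^{H}X,{}^{H}Y,{}^{V}\sigma)$: here the first two cyclic permutations hit the two supported patterns, while the third, of type $({}^{V},{}^{H},{}^{H})$, vanishes. The two surviving terms are $2\,{}^{CG}g_{f}({}^{V}(p\circ R(X,Y)),{}^{V}\sigma)$ and $2\,{}^{CG}g_{f}({}^{V}\sigma,{}^{V}(p\circ R(Y,X)))$.

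The key step is then immediate: the Riemannian curvature is antisymmetric, $R(Y,X)=-R(X,Y)$, so $p\circ R(Y,X)=-\,p\circ R(X,Y)$, and because ${}^{CG}g_{f}$ is symmetric the two terms cancel. The same antisymmetry-plus-symmetry cancellation settles the remaining one-vertical orderings. Thus $A\equiv 0$ on all lifts, hence everywhere by tensoriality, and the triple is quasi-para-K\"{a}hler--Norden. I do not expect any substantive obstacle: once tensoriality reduces the problem to frame fields and (\ref{A3.6}) supplies the only nonzero components, the conclusion is forced by the antisymmetry of $R$; the sole point demanding care is the bookkeeping of which cyclic permutation matches which supported pattern.
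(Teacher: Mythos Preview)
Your proposal is correct and follows essentially the same approach as the paper: reduce to the cyclic sum $A$ via the criterion of \cite{Salimov4}, then use the computation (\ref{A3.6}) to see that $A$ vanishes on all lift combinations. The paper simply asserts ``by means of (\ref{A3.6}), we have $A(\tilde{X},\tilde{Y},\tilde{Z})=0$'', whereas you have spelled out explicitly the only nontrivial case (one vertical argument) and the cancellation via the antisymmetry of $R$ and the symmetry of ${}^{CG}g_{f}$.
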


O. Gil-Medrano and A.M. Naveira proved that both distributions of the almost
product structure on the Riemannian manifold ($M,F,g)$ are totally geodesic
if and only if $\underset{X,Y,Z}{\sigma }g((\nabla _{X}\varphi )Y,Z)=0$ for
any $X,Y,Z\in \Im _{0}^{1}(M)$ \cite{Gil}$.$ As a consequence of Theorem \ref%
{Theo5}, we have the following.

\begin{corollary}
Both distributions of the almost product Riemannian manifold $(T^{\ast }M,$ $%
J,^{CG}g_{f})$ are totally geodesic.
\end{corollary}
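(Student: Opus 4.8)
The plan is to obtain the statement directly by combining Theorem \ref{Theo5} with the Gil-Medrano--Naveira characterization recalled immediately above it. First I would observe that $J$ defined by (\ref{A3.5}) is genuinely an almost product structure on $T^{\ast }M$: one has $J^{2}=I_{T^{\ast }M}$, and since ${}^{CG}g_{f}$ is pure with respect to $J$ (the Nordenian property already used in Theorem \ref{Theo1}), the triple $(T^{\ast }M,J,{}^{CG}g_{f})$ is an almost product Riemannian manifold in the sense recalled in 4.1. Here the $(-1)$-eigenbundle of $J$ is the horizontal distribution and the $(+1)$-eigenbundle is the vertical distribution, so the ``two distributions'' of the statement are exactly these.

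Next I would invoke Theorem \ref{Theo5}, which says that $(T^{\ast }M,J,{}^{CG}g_{f})$ is a quasi-para-K\"{a}hler--Norden manifold. By the definition recalled in 4.2 together with the equivalence of \cite{Salimov4}, this is precisely the assertion that $\underset{\widetilde{X},\widetilde{Y},\widetilde{Z}}{\sigma }\,{}^{CG}g_{f}((\widetilde{\nabla }_{\widetilde{X}}J)\widetilde{Y},\widetilde{Z})=0$ for all $\widetilde{X},\widetilde{Y},\widetilde{Z}\in \Im _{0}^{1}(T^{\ast }M)$, where $\widetilde{\nabla }$ is the Levi-Civita connection of ${}^{CG}g_{f}$ and $\sigma $ denotes the cyclic sum over the three arguments. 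Applying the theorem of O. Gil-Medrano and A.M. Naveira \cite{Gil} with $F=J$, $g={}^{CG}g_{f}$ and $\nabla =\widetilde{\nabla }$ then yields that both eigendistributions of $J$ are totally geodesic, since that theorem states exactly that this totally geodesic property is equivalent to the vanishing of the above cyclic sum.

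There is no genuine computational obstacle here: the corollary is an immediate consequence of two results already in hand. The only point demanding care is the bookkeeping that links them, namely checking that the quasi-para-K\"{a}hler--Norden condition expressed through the $\Phi _{J}$-operator in (\ref{A3.6}) and Theorem \ref{Theo5} is indeed the cyclic condition $\underset{X,Y,Z}{\sigma }g((\nabla _{X}\varphi )Y,Z)=0$ required as the hypothesis of \cite{Gil}. This identification is supplied verbatim by \cite{Salimov4}, so once it is in place the conclusion follows with no further work.
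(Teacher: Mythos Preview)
Your proposal is correct and follows exactly the route the paper takes: the corollary is stated immediately after the Gil-Medrano--Naveira characterization and is presented as a direct consequence of Theorem \ref{Theo5}, with the link supplied by the equivalence from \cite{Salimov4}. Your write-up simply makes explicit the chain of implications that the paper leaves implicit.
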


\textit{4.3.} Let $F$ be an almost product structure and $\nabla $ be a
linear connection on an $n$-dimensional Riemannian manifold $M.$ The product
conjugate connection $\nabla ^{(F)}$ of $\nabla $ is defined by%
\begin{equation*}
\nabla _{X}^{(F)}Y=F(\nabla _{X}FY)
\end{equation*}%
for all $X,Y\in \Im _{0}^{1}(M).$ If $(M,F,g)$ is an almost product
Riemannian manifold, then ($\nabla _{X}^{(F)}g)(FY,FZ)=(\nabla _{X}g)(Y,Z)$,
i.e. $\nabla $ is a metric connection with respect to $g$ if and only if $%
\nabla ^{(F)}$ is so. From this, we can say that if $\nabla $ is the
Levi-Civita connection of $g$, then $\nabla ^{(F)}$ is a metric connection
with respect to $g$ \cite{Blaga}$.$ The family of metric connections on a
Riemannian manifold $M$ which have the same geodesics as the Levi-Civita
connection is a distinguished class among the family of all connections on $%
M $. This family attracted the attention of E. Cartan in the early 20th
century. Since then, many mathematicians have been concerned with its study.
These connections arise in a natural way in theoretical and mathematical
physics. For example, such a connection is of particular interest in string
and superstring theory \cite{Agricola,Strominger}.

By the almost product structure ${J}$ defined by (\ref{A3.5}) and the
Levi-Civita connection $\widetilde{\nabla }$ given by Proposition \ref%
{propo1}, we write the product conjugate connection $\widetilde{\nabla }%
^{(J)}$ of $\widetilde{\nabla }$ as follows:%
\begin{equation*}
\widetilde{\nabla }_{\widetilde{X}}^{(J)}\widetilde{Y}=J(\widetilde{\nabla }%
_{\widetilde{X}}J\widetilde{Y})
\end{equation*}%
for all $\widetilde{X},\widetilde{Y}\in \Im _{0}^{1}(T^{\ast }M).$ Also note
that $\widetilde{\nabla }^{(J)}$ is a metric connection of the rescaled
Cheeger Gromoll type metric $^{CG}g_{f}$. The standart calculations give the
following theorem.

\begin{theorem}
Let $(M,g)$ be a Riemannian manifold and let $T^{\ast }M$ be its cotangent
bundle equipped with the rescaled Cheeger Gromoll type metric $^{CG}g_{f}$
and the almost product structure $J.$ Then the product conjugate connection
(or metric connection) $\widetilde{\nabla }^{(J)}$ is as follows: 
\begin{equation*}
\begin{array}{l}
{i)\mathrm{\;\;}{}}\widetilde{{\nabla }}{_{{}^{H}X}{}^{H}Y={}^{H}\left(
\nabla _{X}Y+^{f}A(X,Y)\right) -\frac{1}{2}{}^{V}\left( p\circ R\left(
X,Y\right) \right) ,} \\ 
{ii)\mathrm{\;\;}{}}\widetilde{{\nabla }}{_{{}^{H}X}{}^{V}\theta =-{}\frac{1%
}{2f\alpha }{}^{H}\left( p\left( g^{-1}\circ R\left( \mathrm{\;},X\right) 
\widetilde{\theta }\right) \right) +^{V}\left( \nabla _{X}\theta \right) ,}
\\ 
{iii)\mathrm{\;\;}}\widetilde{{\nabla }}{_{{}^{V}\omega }{}^{H}Y=\frac{1}{%
2f\alpha }{}^{H}\left( p\left( g^{-1}\circ R\left( \mathrm{\;,}Y\right) 
\tilde{\omega}\right) \right) ,} \\ 
{iv)\mathrm{\;\;\;}{}}\widetilde{{\nabla }}{_{{}^{V}\omega }{}^{V}\theta =-%
\frac{1}{\alpha }\left( {}^{CG}g\left( {}^{V}\omega ,\gamma \delta \right)
{}^{V}\theta +{}^{CG}g_{f}\left( {}^{V}\theta ,\gamma \delta \right)
{}^{V}\omega \right) +\frac{\alpha +1}{\alpha }{}^{CG}g}_{f}{\left(
{}^{V}\omega ,{}^{V}\theta \right) \gamma \delta } \\ 
{\mathrm{\;\;\;\;\;\;\;\;\;\;\;\;\;\;\;\ }-\frac{1}{\alpha }{}^{CG}g}_{f}{%
\left( {}^{V}\omega ,\gamma \delta \right) {}^{CG}g}_{f}{\left( {}^{V}\theta
,\gamma \delta \right) \gamma \delta .}%
\end{array}%
\end{equation*}
\end{theorem}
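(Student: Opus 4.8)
The plan is to prove the four formulas by a direct evaluation of the defining relation $\widetilde{\nabla}^{(J)}_{\widetilde{X}}\widetilde{Y}=J(\widetilde{\nabla}_{\widetilde{X}}J\widetilde{Y})$ on the four pure-lift pairs $(\widetilde{X},\widetilde{Y})\in\{{}^{H}X,{}^{V}\omega\}\times\{{}^{H}Y,{}^{V}\theta\}$, reading the Levi-Civita connection $\widetilde{\nabla}$ off Proposition \ref{propo2} and using the definition (\ref{A3.5}) of $J$. Since $\{{}^{H}X_{(j)},{}^{V}\theta^{(j)}\}$ frames $T^{\ast}M$ and $\widetilde{\nabla}^{(J)}$ is again a linear connection (being the product conjugate of one), it is $C^{\infty}(T^{\ast}M)$-linear in $\widetilde{X}$ and satisfies the Leibniz rule in $\widetilde{Y}$, so it suffices to verify the identities on these four pairs of lifts.

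The computation simplifies once one observes that $J$ acts as a scalar on each pure lift: $J({}^{H}Y)=-{}^{H}Y$ and $J({}^{V}\theta)={}^{V}\theta$. Hence for a pure lift $\widetilde{Y}$ we may write $J\widetilde{Y}=\epsilon_Y\,\widetilde{Y}$ with $\epsilon_Y=-1$ when $\widetilde{Y}$ is horizontal and $\epsilon_Y=+1$ when it is vertical, so that $\widetilde{\nabla}_{\widetilde{X}}J\widetilde{Y}=\epsilon_Y\,\widetilde{\nabla}_{\widetilde{X}}\widetilde{Y}$ because $\epsilon_Y$ is a constant. Splitting $\widetilde{\nabla}_{\widetilde{X}}\widetilde{Y}=(\widetilde{\nabla}_{\widetilde{X}}\widetilde{Y})^{H}+(\widetilde{\nabla}_{\widetilde{X}}\widetilde{Y})^{V}$ into its horizontal and vertical parts and applying $J$ a second time, which reverses the sign of the horizontal part and fixes the vertical part, I obtain the uniform rule $\widetilde{\nabla}^{(J)}_{\widetilde{X}}\widetilde{Y}=\epsilon_Y\bigl(-(\widetilde{\nabla}_{\widetilde{X}}\widetilde{Y})^{H}+(\widetilde{\nabla}_{\widetilde{X}}\widetilde{Y})^{V}\bigr)$. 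Thus $\widetilde{\nabla}^{(J)}$ is obtained from Proposition \ref{propo2} by a purely combinatorial sign prescription, with no further geometric input required.

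Applying this rule case by case yields the asserted formulas: in i) the input $\widetilde{Y}={}^{H}Y$ gives $\epsilon_Y=-1$, so the horizontal term ${}^{H}(\nabla_XY+{}^{f}A(X,Y))$ is preserved while the vertical curvature term $\tfrac12{}^{V}(p\circ R(X,Y))$ changes sign; in ii) with $\widetilde{Y}={}^{V}\theta$ one has $\epsilon_Y=+1$, so the vertical term ${}^{V}(\nabla_X\theta)$ is unchanged while the horizontal curvature term flips sign; in iii) the factor $\epsilon_Y=-1$ cancels the sign produced by the outer $J$ on the lone horizontal term, leaving it unchanged; and in iv), where $\widetilde{\nabla}_{{}^{V}\omega}{}^{V}\theta$ is purely vertical, both $\epsilon_Y=+1$ and the outer $J$ act trivially, so the expression agrees verbatim with Proposition \ref{propo2} iv). The only step demanding care is the consistent bookkeeping of the two independent signs, namely the scalar $\epsilon_Y$ and the horizontal sign reversal of the outer $J$; there is no analytic obstacle. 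Finally, the parenthetical claim that $\widetilde{\nabla}^{(J)}$ is a metric connection requires no separate argument here, as it follows from the general fact recalled in Subsection 4.3 that the product conjugate of the Levi-Civita connection of an almost product Riemannian metric is automatically metric, together with the purity of $^{CG}g_f$ with respect to $J$ established in Theorem \ref{Theo1}.
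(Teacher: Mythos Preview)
Your proof is correct and is precisely the ``standard calculation'' the paper alludes to without writing out: apply the defining formula $\widetilde{\nabla}^{(J)}_{\widetilde{X}}\widetilde{Y}=J(\widetilde{\nabla}_{\widetilde{X}}J\widetilde{Y})$ to each pure-lift pair and read off the result from Proposition~\ref{propo2}. Your $\epsilon_Y$ sign bookkeeping is a tidy way to organize the four cases, but the underlying approach is identical to the paper's.
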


The relationship between curvature tensors $R_{\nabla }$ and $R_{\nabla
^{(F)}}$ of the connections $\nabla $ and $\nabla ^{(F)}$ is follows: $%
R_{\nabla ^{(F)}}(X,Y,Z)=F(R_{\nabla }(X,Y,FZ)$ for all $X,Y,Z\in \Im
_{0}^{1}(M)$ \cite{Blaga}$.$ By means of the almost product structure ${J}$
defined by (\ref{A3.5}) and Proposition \ref{propo3}, From $\widetilde{R}_{%
\widetilde{\nabla }^{(J)}}(\widetilde{X},\widetilde{Y},\widetilde{Z})=J(%
\widetilde{R}_{\widetilde{\nabla }}(\widetilde{X},\widetilde{Y},J\widetilde{Z%
})$, components of the curvature tensor $\widetilde{R}_{\widetilde{\nabla }%
^{(J)}}$ of the product conjugate connection (or metric connection) $%
\widetilde{\nabla }^{(J)}$ can easily be computed. Lastly, by using the
almost product structure $^{D}I$, another metric connection of the rescaled
Cheeger Gromoll type metric $^{CG}g_{f}$ can be constructed.

\end{document}